\theoremstyle{plain}
\newtheorem{theorem}{Theorem}
\newtheorem{lemma}[theorem]{Lemma}
\newtheorem{corollary}[theorem]{Corollary}
\newtheorem{theoremx}{Theorem}
\theoremstyle{definition}
\theoremstyle{remark}
\newtheorem{remark}[theorem]{Remark}
\title{Spectral properties of balanced trees and dendrimers}
\author{Ivan Damnjanovi\'c\thanks{The corresponding author.}\\
\small University of Ni\v s, Faculty of Electronic Engineering\\[-0.8ex] 
\small Aleksandra Medvedeva 14, 18115 Ni\v s, Serbia\\
\small\tt ivan.damnjanovic@elfak.ni.ac.rs\\
\and
Slobodan Filipovski\thanks{The second author is supported by the Slovenian Research Agency through the grants P1-0285, J1-1695, J1-9108 and J1-9110, as well as the bilateral project 337-00-21/2020-09/28 between the Slovenian Research Agency and the Ministry of Education, Science and Technological Development of the Republic of Serbia.}\\
\small University of Primorska, Faculty of Mathematics,\\[-0.8ex]
\small Natural Sciences and Information Technologies\\[-0.8ex]
\small Glagolja\v{s}ka 8, 6000 Koper, Slovenia\\
\small\tt slobodan.filipovski@famnit.upr.si\\
\and
Dragan Stevanovi\'c\thanks{The third author is supported by the Serbian Academy of Sciences and Arts through the grant F-159 and by the Ministry of Education, Science and Technological Development of the Republic of Serbia through the Mathematical Institute of SASA, as well as the bilateral project 337-00-21/2020-09/28 between the Slovenian Research Agency and the Ministry of Education, Science and Technological Development of the Republic of Serbia.}\\
\small Mathematical Institute of the Serbian Academy of Sciences and Arts\\[-0.8ex]
\small Kneza Mihaila 36, 11000 Belgrade, Serbia\\
\small\tt dragan\_stevanovic@mi.sanu.ac.rs}
\begin{document}

\maketitle

\newpage
\begin{abstract}
We investigate the spectral properties of balanced trees and dendrimers, with a view toward unifying and improving the existing results. Here we find a semi-factorized formula for their characteristic polynomials. Afterwards, we determine their spectra via the aforementioned factors. In the end, we analyze the behavior of the energy of dendrimers and compute lower and upper bound approximations for it.

%A balanced tree is a rooted tree such that all the vertices from the same level have an equal degree, while a dendrimer represents a balanced tree whose internal vertices all have the same degree. Here we discuss characteristic polynomials, spectra and energies of balanced trees and dendrimers, with a view toward unifying and improving existing results.

\bigskip\noindent
{\bf Mathematics Subject Classification:} 05C50, 05C05, 05C92, 40A05. \\
{\bf Keywords:} Balanced tree, Dendrimer, Characteristic polynomial, Spectrum, Graph energy.
\end{abstract}

\section{Introduction}

Let $G$ be a simple graph with $n$ vertices and the adjacency matrix $A(G)$. 
Let $P_x(G)=\det(xI-A(G))$ denote the characteristic polynomial of $A(G)$, and
let $\lambda_1, \lambda_2, \ldots, \lambda_n$ be the eigenvalues of $A(G)$.
The energy of $G$ is defined as $E(G) = \sum_{j=1}^{n} \lvert \lambda_j \rvert$, 
as introduced by Gutman in \cite{gutman_energy}.
Let $\sigma^*(G)$ represent the set of all the distinct eigenvalues of~$A(G)$.

A balanced tree is an unweighted rooted tree such that all the vertices from the same level have an equal degree, while a dendrimer is defined as a balanced tree whose internal vertices all have the same degree. We will use $d(l,k)$ to denote the dendrimer which contains $l+1$ levels enumerated from $0$ to~$l$ with each internal vertex having a degree of~$k$. More about the various applications of dendrimers can be found, for example, in \cite{dendrimer_applications_1, dendrimer_applications_2}. 

In this paper, we investigate the spectral properties of balanced trees and dendrimers. The two main results we obtain deal with the computation of $E(d(l, k))$ and are given as follows:
\begin{theorem}\label{main_th_1}
    For any fixed value of $l \ge 1$, we have
    \begin{equation*}
        E(d(l, k)) \sim 2 (k-1)^{l-1/2} \qquad \mbox{as $\ k \to \infty$} \, .
    \end{equation*}
    Also, for any fixed value of $k \ge 3$, we have
    \begin{equation*}
        E(d(l, k)) \sim \mu_k (k-1)^{l-1/2} \qquad \mbox{as $\ l \to \infty$} \, .
    \end{equation*}
    where $\mu_k$ is the positive real number which represents the sum of the convergent positive series
    $$\sum_{j = 0}^{\infty} f_j (k-1)^{-j}$$
    defined by
    $$f_j = \begin{cases}
        2 \csc\left( \dfrac{\pi}{2j+6} \right) - 2\csc\left( \dfrac{\pi}{2j+2} \right), & 2 \mid j \, ,\\
        2 \cot\left( \dfrac{\pi}{2j+6} \right) - 2\cot\left( \dfrac{\pi}{2j+2} \right), & 2 \nmid j \, .
    \end{cases}$$
\end{theorem}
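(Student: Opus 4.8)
The plan is first to record the spectral decomposition of $d(l,k)$ coming from the semi-factorized characteristic polynomial (the factorization advertised in the abstract, obtained by exploiting the symmetry of the tree). With $q:=k-1$, that decomposition says that the spectrum of $d(l,k)$ is the disjoint union of the eigenvalues of a weighted path $W$ on $l+1$ vertices, with consecutive edge weights $\sqrt{k},\sqrt{k-1},\dots,\sqrt{k-1}$, each taken once, together with, for every level $i\in\{1,\dots,l\}$, the eigenvalues of $\sqrt{k-1}\,A(P_{l-i+1})$ taken with multiplicity $m_i$, where $m_1=k-1$ and $m_i=k(k-2)(k-1)^{i-2}$ for $i\ge 2$. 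Summing absolute values of eigenvalues, and using $E\bigl(\sqrt{k-1}\,A(P_m)\bigr)=\sqrt{k-1}\,E(P_m)$, gives
\begin{equation*}
E(d(l,k))=E(W)+\sqrt{k-1}\sum_{i=1}^{l}m_i\,E(P_{l-i+1}),
\end{equation*}
into which I would insert the standard path-energy formulas $E(P_n)=2\csc\frac{\pi}{2n+2}-2$ for even $n$, $E(P_n)=2\cot\frac{\pi}{2n+2}-2$ for odd $n$, together with $E(P_0)=E(P_1)=0$.

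The second step is to repackage this sum. Substituting $m=l-i+1$ in the terms with $i\ge 2$ and using the identity $k(k-2)=q^2-1$, one collapses the expression to
\begin{equation*}
E(d(l,k))=q^{\,l-1/2}\Bigl((q^2-1)\sum_{m=1}^{l-1}q^{-m}E(P_m)\Bigr)+q^{3/2}E(P_l)+E(W),
\end{equation*}
where $q^{3/2}E(P_l)$ is the $i=1$ contribution and $E(W)$ the contribution of the weighted path. Gershgorin's theorem applied to $W$ shows that all its eigenvalues have modulus at most $2\sqrt{k}$, so $E(W)=O(\sqrt{k}\,l)$, while $E(P_l)=O(l)$ from the closed forms; hence the last two terms together are $O(\sqrt{k}\,l)$.

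Both asymptotic statements now drop out of this form. For fixed $k\ge 3$ (so $q\ge 2$) and $l\to\infty$, the $O(l)$ remainder is negligible against $q^{\,l-1/2}$, and the partial sums $\sum_{m=1}^{l-1}q^{-m}E(P_m)$ converge, since $E(P_m)=\Theta(m)$ whereas $q^{-m}$ decays geometrically; therefore $E(d(l,k))\sim\mu_k\,q^{\,l-1/2}$ with $\mu_k=(q^2-1)\sum_{m=1}^{\infty}q^{-m}E(P_m)$. Rearranging, and using $E(P_0)=E(P_1)=0$ to align the two index ranges,
\begin{equation*}
(q^2-1)\sum_{m\ge 1}q^{-m}E(P_m)=\sum_{m\ge 0}q^{-m}E(P_{m+2})-\sum_{m\ge 0}q^{-m}E(P_m)=\sum_{m\ge 0}\bigl(E(P_{m+2})-E(P_m)\bigr)q^{-m},
\end{equation*}
and a direct comparison with the path-energy formulas shows that $E(P_{m+2})-E(P_m)$ is precisely $f_m$; this identifies $\mu_k$ with the series in the statement. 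Its terms are positive — path energy strictly increases with length, equivalently $\csc$ and $\cot$ are decreasing on $(0,\pi/2)$ — and bounded (indeed $f_m\to 8/\pi$, since $E(P_n)=\tfrac{4n}{\pi}+O(1)$), so the series converges for every $q\ge 2$. For fixed $l$ and $k\to\infty$ I would argue directly from the same form: when $l\ge 3$ the inner sum tends to $2$ (its dominant term is $m=2$, since $E(P_1)=0$ and $E(P_2)=2$), so the main term is $\sim 2q^{\,l-1/2}$ and dominates the $O(q^{3/2})$ remainder; when $l=2$ the inner sum is empty and $E(d(2,k))=q^{3/2}E(P_2)+E(W)=2q^{3/2}+O(\sqrt{k})\sim 2q^{3/2}$; and when $l=1$, $d(1,k)=K_{1,k}$ with $E(d(1,k))=2\sqrt{k}\sim 2\sqrt{k-1}$. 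In every case $E(d(l,k))\sim 2(k-1)^{\,l-1/2}$.

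The step I expect to cost the most effort is the repackaging in the second display: carrying the index shift and the boundary terms accurately so that the telescoping $f_m=E(P_{m+2})-E(P_m)$ genuinely emerges, and checking that the leftover $q^{3/2}E(P_l)+E(W)$ never competes with the leading power of $k-1$ — in particular the mild coincidence at $l=2$, where this leftover is itself of the leading order $q^{3/2}=(k-1)^{\,l-1/2}$. Once the expression is in the stated form, the two limit extractions and the reindexing $(q^2-1)\sum q^{-m}E(P_m)=\sum f_m q^{-m}$ are entirely mechanical.
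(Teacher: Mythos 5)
Your argument is correct, and its skeleton coincides with the paper's: the same spectral decomposition (the factor $W_{l,l+1}$ is exactly the characteristic polynomial of your weighted path $W$, the factors $W_{l,j}$ for $j\le l$ are Dickson polynomials whose roots are $\sqrt{k-1}$ times the eigenvalues of $P_j$, with the same multiplicities), the same closed forms for $E(P_n)$, the same telescoping that turns $(q^2-1)\sum_m q^{-m}E(P_m)$ into $\sum_m f_m q^{-m}$ with $f_m=E(P_{m+2})-E(P_m)$, and the same convergence argument via $f_m\to 8/\pi$. The one genuine divergence is your treatment of the unfactorable piece: the paper identifies $W_{l,l+1}$ as a Geronimus polynomial and spends two lemmas (interlacing bounds on its roots, then a trigonometric estimate) to pin $\Psi(W_{l,l+1})$ down to within an additive $O(\sqrt{k-1})$ of $f_{l-1}\sqrt{k-1}+\Psi(W_{l,l-1})$, which lets it fold this term into the telescoped series and obtain the explicit two-sided bounds of its Theorem~\ref{energy_approx} (needed later for Theorem~\ref{main_th_2}). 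You instead discard this precision and bound $E(W)=O(\sqrt{k}\,l)$ by Gershgorin, which is entirely sufficient for both asymptotic statements and shortcuts the most technical part of the paper's proof — at the cost of not recovering the quantitative inequalities. Your handling of the boundary cases is sound: the only spot where the $q^{3/2}E(P_l)+E(W)$ remainder is of leading order is $l=2$, where you correctly note that $q^{3/2}E(P_2)=2q^{3/2}$ \emph{is} the main term (your phrase ``the inner sum tends to $2$'' for $l\ge 3$ should read ``the inner sum times $(q^2-1)$ tends to $2$'', but the intended computation is right).
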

\begin{theorem}\label{main_th_2}
    For a given dendrimer $d(l, k)$, where $k \ge 3$ and $l \ge 2$, we have
    \begin{align*}
        E(d(l,k)) &< (k-1)^{l-1/2} \left( 2 + \dfrac{0.5 + \sqrt{2} + \sqrt{3} + \sqrt{5}}{k-1} \right) ,\\
        E(d(l,k)) &> (k-1)^{l-1/2} \left( 2 + \dfrac{2 \sqrt{2}}{k-1} \right) .
    \end{align*}
\end{theorem}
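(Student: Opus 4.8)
The starting point is the semi‑factorized characteristic polynomial and the attendant spectral description from the preceding sections, which I recall in the most convenient form. Write $b=k-1$, let $\pi_m(x)=b^{m/2}P_{x/\sqrt b}(P_m)$ be the path polynomials rescaled so that their roots are exactly $\sqrt b$ times the eigenvalues of $P_m$, and set $\chi_l(x)=\pi_{l+1}(x)-\pi_{l-1}(x)$; using the three‑term recurrence $\pi_{m+1}=x\pi_m-b\pi_{m-1}$ one sees $\chi_l(x)=x\pi_l(x)-k\pi_{l-1}(x)$, the characteristic polynomial of the level quotient of $d(l,k)$, i.e.\ of the weighted path $W_l$ on $l+1$ vertices whose first edge has weight $\sqrt k$ and whose remaining $l-1$ edges have weight $\sqrt b$. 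The factorization then reads
\[
  P_x\bigl(d(l,k)\bigr)=\chi_l(x)\cdot\pi_l(x)^{\,k-1}\cdot\prod_{j=1}^{l-1}\pi_j(x)^{\,k(k-2)(k-1)^{\,l-1-j}},
\]
and, since the moduli of the roots of $\pi_m$ sum to $\sqrt b\,E(P_m)$ while the roots of $\chi_l$ are the eigenvalues of $W_l$,
\[
  E\bigl(d(l,k)\bigr)=E(W_l)+\sqrt b\,(k-1)\,E(P_l)+\sqrt b\sum_{j=1}^{l-1}k(k-2)\,b^{\,l-1-j}\,E(P_j).
\]

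The second step is a telescoping simplification. Dividing by $(k-1)^{l-1/2}$, using $k(k-2)=b^{2}-1$ and the identity $f_j=E(P_{j+2})-E(P_j)$ (with $E(P_0)=E(P_1)=0$) already underlying Theorem~\ref{main_th_1}, the path part $(1-b^{-2})\sum_{j=2}^{l-1}b^{2-j}E(P_j)$ collapses to $\sum_{m=0}^{l-3}f_m b^{-m}-b^{-(l-2)}E(P_{l-2})-b^{-(l-1)}E(P_{l-1})$; adding $b^{2-l}E(P_l)$ converts the $E(P_{l-2})$ boundary term into $b^{-(l-2)}f_{l-2}$, and writing $E(W_l)=\sqrt b\,E(P_{l+1})+\bigl(E(W_l)-\sqrt b\,E(P_{l+1})\bigr)$ converts the $E(P_{l-1})$ boundary term into $b^{-(l-1)}f_{l-1}$. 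This produces the master identity
\[
  \frac{E\bigl(d(l,k)\bigr)}{(k-1)^{\,l-1/2}}=\sum_{j=0}^{l-1}f_j\,(k-1)^{-j}+\varepsilon_{l,k},\qquad
  \varepsilon_{l,k}:=\frac{E(W_l)-\sqrt{k-1}\,E(P_{l+1})}{(k-1)^{\,l-1/2}} .
\]

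Next I would pin down $\varepsilon_{l,k}$ from both sides. The matrix $W_l$ arises from the uniformly weighted path $\sqrt{k-1}\,P_{l+1}$ by raising one edge weight from $\sqrt{k-1}$ to $\sqrt k$; since the energy of a weighted forest (the nuclear norm of its weighted adjacency matrix) is nondecreasing in each edge weight — this follows from the Coulson integral formula, the matching polynomial of a forest having coefficients monotone in the squared edge weights — one gets $E(W_l)\ge\sqrt{k-1}\,E(P_{l+1})$, hence $\varepsilon_{l,k}\ge 0$, strictly. Conversely, $W_l-\sqrt{k-1}\,P_{l+1}$ is a symmetric rank‑two matrix with eigenvalues $\pm(\sqrt k-\sqrt{k-1})$, so $\bigl|E(W_l)-\sqrt{k-1}\,E(P_{l+1})\bigr|\le\|W_l-\sqrt{k-1}\,P_{l+1}\|_{*}=2(\sqrt k-\sqrt{k-1})$, whence $0\le\varepsilon_{l,k}\le 2(\sqrt k-\sqrt{k-1})(k-1)^{-(l-1/2)}<(k-1)^{-l}$.

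Both bounds of Theorem~\ref{main_th_2} now follow quickly. For the lower bound the partial sum already contains $f_0+f_1(k-1)^{-1}=2+2\sqrt2\,(k-1)^{-1}$, every further $f_j$ is positive, and $\varepsilon_{l,k}\ge 0$; strictness comes from the $f_2(k-1)^{-2}$ term when $l\ge3$ and from $\varepsilon_{2,k}>0$ (equivalently $E(W_2)=2\sqrt{2k-1}>2\sqrt{2(k-1)}$) when $l=2$. For the upper bound I would use $\sum_{j=0}^{l-1}f_j(k-1)^{-j}\le\mu_k$ and control $\mu_k$ by peeling off the leading corrections $f_1=2\sqrt2$, $f_2=2\sqrt5-2$, $f_3=2\sqrt3+2-2\sqrt2$ explicitly — these are the source of the constants $\sqrt2,\sqrt3,\sqrt5$ — and bounding the remaining geometric tail with $(k-1)^{-1}\le\tfrac12$ and $f_j<8/\pi$; adding $\varepsilon_{l,k}<(k-1)^{-l}\le(k-1)^{-2}$ (the source of the extra $0.5$) keeps the total below $2+(0.5+\sqrt2+\sqrt3+\sqrt5)(k-1)^{-1}$, with $l=2$ checked directly from $E(d(2,k))(k-1)^{-3/2}=2+2\sqrt{2k-1}\,(k-1)^{-3/2}\le 2+2\sqrt3\,(k-1)^{-1}$. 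The one delicate point is exactly this last estimate: the constant $0.5+\sqrt2+\sqrt3+\sqrt5$ is not asymptotically tight yet not comfortably loose, so the tail of $\mu_k$ and the size of $\varepsilon_{l,k}$ must be balanced with some care, the tightest instance being $k=3$ with $l$ small.
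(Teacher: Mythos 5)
Your proposal is correct, and while its skeleton coincides with the paper's (the factorization of $P_x(d(l,k))$, the telescoping of the level contributions into the series $\sum_j f_j(k-1)^{-j}$ with $f_j=E(P_{j+2})-E(P_j)$, the explicit values $f_1=2\sqrt2$, $f_2=2\sqrt5-2$, $f_3=2+2\sqrt3-2\sqrt2$, and a geometric bound on the tail), you handle the one genuinely delicate term — the contribution of the Geronimus factor $W_{l,l+1}$ — in a different and sharper way. The paper's Lemma \ref{last_wj_approx} locates the roots of $G_{l+1}(x,k)$ in cosine windows and sums them, yielding the two-sided error $-2.4(k-1)^{1/2}<\Psi(W_{l,l+1})-\text{(main term)}<2(k-1)^{1/2}$ of Theorem \ref{energy_approx}, i.e.\ an error of order $(k-1)^{-(l-1)}$ after normalization; you instead view the quotient path as a rank-two edge-weight perturbation of $\sqrt{k-1}\,P_{l+1}$ and get $0<\varepsilon_{l,k}\le 2(\sqrt k-\sqrt{k-1})(k-1)^{-(l-1/2)}<(k-1)^{-l}$ from the trace-norm triangle inequality plus monotonicity of forest energy in the edge weights (a correct standard fact via the matching-polynomial form of the Coulson integral). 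This buys you a one-sided error a full factor of $(k-1)$ smaller: the lower bound then falls out of positivity of the $f_j$ alone, and the upper bound no longer needs the paper's separate treatment of $l=3$. Two small points to fix when writing this up: your tail estimate invokes ``$f_j<8/\pi$,'' which is false for odd $j$ (by Lemma \ref{cool_sequences} the odd-indexed subsequence decreases to $8/\pi$ from \emph{above}); replace it with $f_j\le f_3$ for $j\ge 4$, which is what the paper uses and which makes your budget sum to exactly $0.5+\sqrt2+\sqrt3+\sqrt5$, with strictness supplied by $\varepsilon_{l,k}<0.5/(k-1)$. Also state explicitly that all $f_j>0$ (immediate from $E(P_{j+2})>E(P_j)$), since the lower bound for $l\ge 3$ rests on discarding the positive terms $j\ge 2$.
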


The remainder of this paper is structured as follows. In Section~\ref{sc-balanced} we prove a result which yields the characteristic polynomials and spectra of balanced trees, improving the previous result of Rojo and Soto~\cite{rojo}. We further focus on computing the characteristic polynomials and spectra of dendrimers in Sections \ref{sc-dendrimer-1}--\ref{sc-dendrimer-2}. The results in these sections improve the previous results of Stevanovi\'c~\cite{stevanovic_energy} and Bokhary and Tabassum~\cite{bokhary_energy}. In Section~\ref{sc-dendrimer-3} we deal with the computation of the energy of dendrimers and use the results from Sections \ref{sc-dendrimer-1}--\ref{sc-dendrimer-2} in order to prove the two main theorems, namely Theorem \ref{main_th_1} and Theorem \ref{main_th_2}.

\section{Characteristic polynomials and eigenvalues of balanced trees}
\label{sc-balanced}

Let $T$ be a balanced tree. First, we can uniquely represent $T$ up to isomorphism by assigning to it a characteristic tuple of positive integers $C_T = (c_1, c_2, \ldots, c_l)$. The number of elements in this tuple dictates how many internal levels $T$ has. We will enumerate the tree levels from $0$ to $l$, where the levels $0, 1, \ldots, l-1$ are internal, while the level $l$ contains the leaves. In the tuple $C_T$, the element $c_j$ describes the number of children of each vertex in level $j-1$. In other words, $c_1$ represents the degree of the root of $T$, while $c_j + 1$ determines the degree of each vertex in level $j-1$, for all $2 \le j \le l$.

Furthermore, we are going to use $n_j$ to denote the total number of vertices in level $j$, for $0 \le j \le l$. Here we obviously have that $n_0 = 1$ and $n_j = n_{j-1} c_j$ for all $1 \le j \le l$. Also, let $n_T = \sum_{j=0}^{l}n_j$ so that $n_T$ represents the total number of vertices in $T$. For convenience, we will define $n_{-1} = 0$.

The following theorem describes the characteristic polynomials of arbitrary balanced trees.

\begin{theorem}\label{main-balanced}
Let $T$ be a balanced tree such that its characteristic tuple equals $C_T = (c_1, c_2, \ldots, c_l)$. If we define a sequence of polynomials $Q_0(x), Q_1(x), \ldots, \linebreak Q_{l+1}(x)$ by
\begin{align*}
    Q_0(x) &= 1 \, ,\\
    Q_1(x) &= x \, ,\\
    Q_{j+2}(x) &= x \, Q_{j+1}(x) - c_{l-j} \, Q_j(x) \qquad \mbox{for all $\, 0 \le j \le l-1$} \, ,
\end{align*}
then
\begin{equation*}\label{main-formula}
    P_x(T) = \prod_{j = 1}^{l+1} Q_j(x)^{n_{l+1-j}-n_{l-j}} \, .
\end{equation*}
\end{theorem}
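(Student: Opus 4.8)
The plan is to compute $P_x(T) = \det(xI - A(T))$ by exploiting the highly symmetric, layered structure of the balanced tree, reducing the determinant over a tree with many vertices to a product of determinants of small ``path-like'' matrices, one for each orbit of the leaf-to-root collapsing. The natural tool here is an eigenvector/quotient-matrix argument combined with induction on the number of levels.

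First I would set up the recursion that underlies the polynomials $Q_j$. Think of $T$ as built from the root down: a vertex $v$ in level $j$ together with the entire subtree hanging below it is itself a balanced tree whose characteristic tuple is the tail $(c_{j+1}, \ldots, c_l)$. Let $R_j(x)$ denote the characteristic polynomial of such a subtree rooted at level $j$ (so $R_l(x) = x$ for a leaf, and $R_0(x) = P_x(T)$). A standard cofactor expansion of $\det(xI - A)$ of a rooted tree along the root gives the identity $R_j(x) = x\,R_{j+1}(x)^{c_{j+1}} - c_{j+1}\,R_{j+2}(x)\,R_{j+1}(x)^{c_{j+1}-1}\cdot(\text{something})$ — more precisely, using the classical formula that for a tree $G$ with a pendant-type decomposition at a vertex, $P_x(G) = x\prod_i P_x(G_i) - \sum_i P_x(G-v-v_i)\prod_{i'\neq i}P_x(G_{i'})$, where $G_i$ are the rooted branches at the root's children $v_i$. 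Since all $c_{j+1}$ branches at a level-$j$ vertex are isomorphic (to the level-$(j+1)$ rooted subtree), this collapses to
$$R_j = x\,R_{j+1}^{c_{j+1}} - c_{j+1}\,R_{j+2}\,R_{j+2}^{?}\cdots$$
so I will need to carefully track how the polynomial of ``branch minus its root'' factors as a power of lower-level subtree polynomials times a single new factor. The upshot I expect is that if we write $R_j = A_j \cdot \prod_{i>j} R_i^{\,?}$ by peeling off repeated factors, the genuinely new irreducible-ish factor at level $j$ satisfies exactly the three-term recurrence $Q_{j+2} = x Q_{j+1} - c_{l-j} Q_j$ (with the index reversal coming from the fact that the $Q$'s are indexed from the leaves upward while the tree is described from the root down), and $R_j$ is a product of powers of $Q_1, \ldots, Q_{l+1-j}$.

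The second, cleaner route — and the one I would actually write up — is to guess the answer and verify it by exhibiting enough eigenvectors, or equivalently by an explicit change of basis. For each level $j$ consider the space of functions on $V(T)$ that are supported on levels $\ge$ something and are ``balanced'' appropriately; concretely, build eigenvectors of $A(T)$ that, restricted to each branch, are either identically zero or a fixed radial profile, with the profiles across sibling branches summing to zero. Counting: the number of such independent ``difference'' vectors anchored at level $l-j$ is exactly $n_{l-j} - n_{l-j-1}$ (the dimension drop), and on each such vector $A(T)$ acts as the tridiagonal matrix whose characteristic polynomial is $Q_{j+1}(x)$, because walking up from level $l-j$ toward the root one sees a path-like interaction with off-diagonal entries $\sqrt{c_{l-j}}, \sqrt{c_{l-j-1}}, \ldots$ whose squared values feed into the recurrence. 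Multiplying these contributions over $j = 0, 1, \ldots, l$ — together with one extra ``symmetric'' vector giving the remaining factor $Q_{l+1}(x)^{n_0 - n_{-1}} = Q_{l+1}(x)$ — yields $P_x(T) = \prod_{j=1}^{l+1} Q_j(x)^{n_{l+1-j} - n_{l-j}}$. To make the counting airtight I would verify the degree identity $\sum_{j=1}^{l+1} j\,(n_{l+1-j}-n_{l-j}) = n_T$ by Abel summation, confirming that these eigenvectors span all of $\mathbb{R}^{n_T}$.

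The main obstacle I anticipate is the bookkeeping in the induction: correctly identifying which power of which $Q_j$ appears in the subtree polynomial $R_i$, and checking that when one multiplies out $R_0$ the exponents telescope into the clean form $n_{l+1-j} - n_{l-j}$. This is where the index reversal between the $c$-tuple (read from the root) and the $Q$-recurrence (read from the leaves) is easy to get wrong by one, so I would anchor the induction on the base cases $l = 0$ (a single vertex, $P_x = x = Q_1$) and $l = 1$ (a star $K_{1,c_1}$, with $P_x = x^{c_1 - 1}(x^2 - c_1) = Q_1^{n_0 - n_{-1}}\cdot\,$wait, $Q_1^{\,n_1-n_0}Q_2^{\,n_0-n_{-1}} = x^{c_1-1}(x^2-c_1)$), and then push through the inductive step using the branch-decomposition formula for the characteristic polynomial of a tree at a cut vertex. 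Everything else is routine linear algebra once the recurrence is pinned down.
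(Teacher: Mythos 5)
Your second route is viable and genuinely different from the paper's argument, so the comparison is worth spelling out. The paper never touches eigenvectors or subtree induction: it writes $xI - A(T)$ as a block-tridiagonal matrix in the level basis, multiplies the $(j{+}1)$-th block row by $Q_j(x)$, and performs one sweep of block Gaussian elimination, using the identity $B_{\alpha,\beta}^T B_{\alpha,\beta} = \tfrac{\alpha}{\beta} I_\beta$ implicitly through the recurrence $xQ_{j+1} - c_{l-j}Q_j = Q_{j+2}$; dividing back out the factors introduced on the left and passing to a polynomial identity via infinitely many evaluation points gives the exponents $n_{l+1-j}-n_{l-j}$ in one stroke. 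Your eigenvector decomposition is essentially the Rojo--Soto symmetry reduction that the paper cites as its predecessor: for each level $m$ one takes radial profiles on the subtrees below level $m$, antisymmetrized across the $c_m$ siblings of a common parent, obtaining $n_m - n_{m-1}$ invariant subspaces on each of which $A(T)$ acts as an $(l-m+1)\times(l-m+1)$ tridiagonal matrix with off-diagonal entries $\sqrt{c_{l}},\sqrt{c_{l-1}},\ldots,\sqrt{c_{m+1}}$ (read from the leaf end), plus one fully radial $(l{+}1)$-dimensional block for $Q_{l+1}$. What your approach buys is spectral information for free (explicit eigenvectors and multiplicities); what the paper's approach buys is brevity and no need for the independence/spanning discussion at all.

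That said, as written your proposal still has real holes that you would have to close before it counts as a proof. Your first route is abandoned mid-formula (the ``something'' and ``$R_{j+2}^{?}$'' placeholders are exactly the bookkeeping that makes or breaks the telescoping of exponents), so I would drop it entirely. In the second route you must actually verify two things: (i) that the characteristic polynomial $D_i$ of the $i\times i$ leading principal submatrix of the radial tridiagonal matrix satisfies $D_i = xD_{i-1} - c_{l-i+2}D_{i-2}$, which after reindexing is precisely the stated recurrence $Q_{j+2} = xQ_{j+1} - c_{l-j}Q_j$ --- this is where your anticipated off-by-one in the index reversal lives, and it does come out right; and (ii) that the constructed subspaces are independent and exhaust $\mathbb{R}^{n_T}$. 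For (ii) your proposed degree check $\sum_{j=1}^{l+1} j\,(n_{l+1-j}-n_{l-j}) = n_T$ (Abel summation) is the correct and sufficient safeguard, since a product of monic factors of total degree $n_T$ dividing $P_x(T)$ must equal it. Also note that the ``one extra symmetric vector'' is really an $(l{+}1)$-dimensional invariant subspace contributing the degree-$(l{+}1)$ factor $Q_{l+1}$ once, not a single eigenvector.
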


Before we prove the theorem, we will make certain preliminary definitions which will aid us in creating a more concise proof. For two given positive integers $\alpha$ and $\beta$ such that $\beta \mid \alpha$, we will use $B_{\alpha, \beta} = (b_{ij})_{\alpha \times \beta} \in \mathbb{R}^{\alpha \times \beta}$ to denote the binary matrix whose rows and columns are enumerated from $0$ to $\alpha-1$ and $0$ to $\beta-1$ respectively, such that $b_{ij} = 1$ if and only if $\lfloor \frac{i\beta}{\alpha} \rfloor = j$. For example, we have
\begin{align*}
    B_{8, 2} = \begin{bmatrix}
     1 & 0\\
     1 & 0\\
     1 & 0\\
     1 & 0\\
     0 & 1\\
     0 & 1\\
     0 & 1\\
     0 & 1
    \end{bmatrix} , &&
    B_{9, 3} = \begin{bmatrix} 
    1 & 0 & 0\\
    1 & 0 & 0\\
    1 & 0 & 0\\
    0 & 1 & 0\\
    0 & 1 & 0\\
    0 & 1 & 0\\
    0 & 0 & 1\\
    0 & 0 & 1\\
    0 & 0 & 1
    \end{bmatrix} , &&
    B_{10, 5} = \begin{bmatrix}
    1 & 0 & 0 & 0 & 0\\
    1 & 0 & 0 & 0 & 0\\
    0 & 1 & 0 & 0 & 0\\
    0 & 1 & 0 & 0 & 0\\
    0 & 0 & 1 & 0 & 0\\
    0 & 0 & 1 & 0 & 0\\
    0 & 0 & 0 & 1 & 0\\
    0 & 0 & 0 & 1 & 0\\
    0 & 0 & 0 & 0 & 1\\
    0 & 0 & 0 & 0 & 1
    \end{bmatrix} .
\end{align*}
We can concisely express the adjacency matrix of $T$ with the help of these matrices:
$$A(T) = \begin{bmatrix}
    \mathbf{O} & B_{n_l, n_{l-1}} & \mathbf{O} & \cdots & \mathbf{O} & \mathbf{O}\\
    B_{n_l, n_{l-1}}^T & \mathbf{O} & B_{n_{l-1}, n_{l-2}} & \cdots & \mathbf{O} & \mathbf{O}\\
    \mathbf{O} & B_{n_{l-1}, n_{l-2}}^T & \mathbf{O} & \cdots & \mathbf{O} & \mathbf{O}\\
    \vdots & \vdots & \vdots & \ddots & \vdots & \vdots\\
    \mathbf{O} & \mathbf{O} & \mathbf{O} & \cdots & \mathbf{O} & B_{n_1, n_0}\\
    \mathbf{O} & \mathbf{O} & \mathbf{O} & \cdots & B_{n_1, n_0}^T & \mathbf{O}
\end{bmatrix} \, .$$
Together with $P_x(T) = \det(x \, \mathbf{I}_n - A(T))$, this leads us to
\begin{equation}\label{char_pol}
P_x(T) = \begin{vmatrix}
    x\,\mathbf{I}_{n_l} & -B_{n_l, n_{l-1}} & \mathbf{O} & \cdots & \mathbf{O} & \mathbf{O}\\
    -B_{n_l, n_{l-1}}^T & x\,\mathbf{I}_{n_{l-1}} & -B_{n_{l-1}, n_{l-2}} & \cdots & \mathbf{O} & \mathbf{O}\\
    \mathbf{O} & -B_{n_{l-1}, n_{l-2}}^T & x\,\mathbf{I}_{n_{l-2}} & \cdots & \mathbf{O} & \mathbf{O}\\
    \vdots & \vdots & \vdots & \ddots & \vdots & \vdots\\
    \mathbf{O} & \mathbf{O} & \mathbf{O} & \cdots & x\,\mathbf{I}_{n_1} & -B_{n_1, n_0}\\
    \mathbf{O} & \mathbf{O} & \mathbf{O} & \cdots & -B_{n_1, n_0}^T & x\,\mathbf{I}_{n_0}
\end{vmatrix}  .
\end{equation}
We will compute $P_x(T)$ from Eq.~(\ref{char_pol}).

{\em Proof of Theorem \ref{main-balanced}}. 
By multiplying the $(j+1)$-th block row in Eq.~(\ref{char_pol}) with $Q_j(x)$, for all $1 \le j \le l$, we obtain
\begin{align*}
& P_x(T) \prod_{j = 1}^{l} Q_j(x)^{n_{l-j}} =\\
    &=
    \begin{vmatrix}
    x\,Q_0(x)\mathbf{I}_{n_l} & -Q_0(x)B_{n_l, n_{l-1}} & \mathbf{O} & \cdots & \mathbf{O}\\
    -Q_1(x)B_{n_l, n_{l-1}}^T & x\,W_1(x)\mathbf{I}_{n_{l-1}} & -Q_1(x)B_{n_{l-1}, n_{l-2}} & \cdots & \mathbf{O}\\
    \mathbf{O} & -Q_2(x)B_{n_{l-1}, n_{l-2}}^T & x\,Q_2(x)\mathbf{I}_{n_{l-2}} & \cdots & \mathbf{O}\\
    \vdots & \vdots & \vdots & \ddots & \vdots &\\
    \mathbf{O} & \mathbf{O} & \mathbf{O} & \cdots & x\,Q_l(x)\mathbf{I}_{n_0}
\end{vmatrix} \, .
\end{align*}
The determinant on the right-hand side can easily be calculated by applying the Gaussian elimination on the given block matrix. If we multiply the first block row by $B_{n_l, n_{l-1}}^T$ to the left and then add the obtained result to the second row, we get
\begin{align*}
& P_x(T) \prod_{j = 1}^{l} Q_j(x)^{n_{l-j}} =\\
    &=
    \begin{vmatrix}
    x\,Q_0(x)\mathbf{I}_{n_l} & -Q_0(x)B_{n_l, n_{l-1}} & \mathbf{O} & \cdots & \mathbf{O}\\
    \mathbf{O} & Q_2(x)\mathbf{I}_{n_{l-1}} & -Q_1(x)B_{n_{l-1}, n_{l-2}} & \cdots & \mathbf{O}\\
    \mathbf{O} & -Q_2(x)B_{n_{l-1}, n_{l-2}}^T & x\,Q_2(x)\mathbf{I}_{n_{l-2}} & \cdots & \mathbf{O}\\
    \vdots & \vdots & \vdots & \ddots & \vdots &\\
    \mathbf{O} & \mathbf{O} & \mathbf{O} & \cdots & x\,Q_l(x)\mathbf{I}_{n_0}
\end{vmatrix} \, .
\end{align*}
We can then multiply the second block row by $B_{n_{l-1}, n_{l-2}}^T$ to the left and add the result to the third row, and so on and so forth, until we reach
\begin{align*}
& P_x(T) \prod_{j = 1}^{l} Q_j(x)^{n_{l-j}} =\\
    &=
    \begin{vmatrix}
    Q_1(x)\mathbf{I}_{n_l} & -Q_0(x)B_{n_l, n_{l-1}} & \mathbf{O} & \cdots & \mathbf{O}\\
    \mathbf{O} & Q_2(x)\mathbf{I}_{n_{l-1}} & -Q_1(x)B_{n_{l-1}, n_{l-2}} & \cdots & \mathbf{O}\\
    \mathbf{O} & \mathbf{O} & Q_3(x)\mathbf{I}_{n_{l-2}} & \cdots & \mathbf{O}\\
    \vdots & \vdots & \vdots & \ddots & \vdots &\\
    \mathbf{O} & \mathbf{O} & \mathbf{O} & \cdots & Q_{l+1}(x)\mathbf{I}_{n_0}
\end{vmatrix} \, ,
\end{align*}
which finally gives us
$$P_x(T) \prod_{j = 1}^{l} Q_j(x)^{n_{l-j}} = \prod_{j = 1}^{l+1}Q_j(x)^{n_{l+1-j}} \, .$$
By taking into consideration that $n_{-1} = 0$, we get that for any complex number~$z$ which is not a root of any of the polynomials $Q_1(x), Q_2(x), \ldots, Q_l(x)$, the following equality must hold:
\begin{equation}\label{poly_equal}
P_z(T) = \prod_{j = 1}^{l+1} Q_j(z)^{n_{l+1-j} - n_{l-j}} \, .
\end{equation}
Since each polynomial $Q_j(x)$ is of degree $j$, for all $1 \le j \le l$, we conclude that there are infinitely many complex numbers which are not a root of any of these polynomials. From Eq.\ (\ref{poly_equal}), we see that the polynomials $P_x(T)$ and $\prod_{j = 1}^{l+1} Q_j(x)^{n_{l+1-j} - n_{l-j}}$ must be equal in infinitely many points, which implies that these two polynomials are identical.
\hfill\qed

By Theorem \ref{main-balanced}, $P_x(T)$ is a product of those $Q_j(x)$ for which $n_{l+1-j} - n_{l-j}$ is positive.
This implies the following result.

\begin{theorem}\label{main-corollary}
Let $\Phi$ be the set of positive integers $1 \le j \le l+1$ such that $n_{l+1-j} > n_{l-j}$. 
Then we have
$$
\sigma^*(T) = \bigcup_{j \in \Phi} \, \{ x \colon Q_j(x) = 0\}.
$$
\end{theorem}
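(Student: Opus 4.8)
The plan is to read the result straight off the factorization supplied by Theorem~\ref{main-balanced}, namely $P_x(T) = \prod_{j=1}^{l+1} Q_j(x)^{n_{l+1-j}-n_{l-j}}$. By definition $\sigma^*(T)$ is the set of distinct roots of the characteristic polynomial $P_x(T)$, so everything reduces to identifying which of the $Q_j$ actually occur with a positive exponent and then invoking the fact that the zero set of a product of polynomials over $\C$ is the union of the zero sets of the factors.

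The first step is to check that every exponent $n_{l+1-j}-n_{l-j}$ is a non-negative integer, so that the displayed expression is a genuine polynomial factorization and none of the $Q_j$ sits in a ``denominator''. This follows from the recurrences $n_0 = 1$, $n_j = n_{j-1}c_j$ together with the convention $n_{-1}=0$: since each $c_j$ is a positive integer we get $0 = n_{-1} < n_0 \le n_1 \le \cdots \le n_l$. Substituting $i = l+1-j$, the exponent becomes $n_i - n_{i-1}$ with $i$ ranging over $0,1,\dots,l$, which is always $\ge 0$ and is strictly positive precisely when $n_i > n_{i-1}$, i.e.\ precisely when $j \in \Phi$.

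Hence $P_x(T) = \prod_{j \in \Phi} Q_j(x)^{e_j}$ with each $e_j = n_{l+1-j}-n_{l-j} \ge 1$, the factors indexed outside $\Phi$ having simply vanished. A complex number $z$ is an eigenvalue of $A(T)$ if and only if $P_z(T) = 0$, and because raising a polynomial to a positive integer power leaves its root set unchanged, $P_z(T)=0$ holds if and only if $Q_j(z)=0$ for at least one $j \in \Phi$. This yields $\sigma^*(T) = \bigcup_{j \in \Phi}\{x \colon Q_j(x)=0\}$, as claimed; note in passing that $\Phi$ is always nonempty, since $j = l+1$ belongs to it (it corresponds to $n_0 > n_{-1}$).

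There is no real obstacle here: the statement is an essentially formal consequence of Theorem~\ref{main-balanced}. The only points requiring a moment's attention are the index shift $i = l+1-j$ and the observation that non-negativity of the exponents rests on each $c_j$ being at least $1$; the remainder is the elementary behaviour of roots under products and positive powers.
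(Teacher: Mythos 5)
Your proof is correct and matches the paper's reasoning: the paper derives this theorem immediately from Theorem~\ref{main-balanced} by observing that $P_x(T)$ is a product of exactly those $Q_j(x)$ whose exponent $n_{l+1-j}-n_{l-j}$ is positive, which is precisely the argument you spell out (including the non-negativity of the exponents and the passage from the factorization to the union of root sets).
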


\begin{remark}
The condition $n_{l+1-j} > n_{l-j}$ is always satisfied for $j = l+1$. For $j \le l$, it becomes equivalent to $c_{l+1-j} \neq 1$.
\end{remark}

The use of Theorem~\ref{main-balanced} is exemplified in the following section on the case of dendrimers.

\section{Characteristic polynomials of dendrimers}
\label{sc-dendrimer-1}

If we view a dendrimer $d(l, k)$ as a balanced tree, we see that its characteristic tuple equals $(k, \underbrace{k-1, k-1, \ldots, k-1}_{\text{ $l-1$ times}})$. We can now apply Theorem \ref{main-balanced} in order to compute the characteristic polynomial of $d(l, k)$.

\begin{theorem}\label{dendrimer-char}
Let $W_{l, 0}(x, k), W_{l, 1}(x, k), \ldots, W_{l, l+1}(x, k)$ be a sequence of polynomials defined in the following manner
\begin{align}
\begin{split}\label{w_polynomials}
    W_{l, 0}(x, k) &= 1 \, ,\\
    W_{l, 1}(x, k) &= x \, ,\\
    W_{l, j}(x, k) &= x \, W_{l, j-1}(x, k) - (k-1) \, W_{l, j-2}(x, k) \qquad \mbox{for all $\, 2 \le j \le l$} \, ,\\
    W_{l, l+1}(x, k) &= x \, W_{l,l}(x, k) - k \, W_{l, l-1}(x, k) \, .
\end{split}
\end{align}
Then, for all $k \ge 2$ and $l \ge 1$,
\begin{equation}\label{dendrimer-formula}
P_x(d(l, k)) = W_{l, l+1}(x, k) W_{l,l}(x, k)^{k-1} \prod_{j = 1}^{l-1} W_{l, j}(x, k)^{k(k-2)(k-1)^{l-1-j}} \, .
\end{equation}
\end{theorem}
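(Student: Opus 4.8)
The plan is to apply Theorem~\ref{main-balanced} directly to the balanced tree $T = d(l,k)$ whose characteristic tuple is $C_T = (k, k-1, k-1, \ldots, k-1)$ with $l-1$ copies of $k-1$. First I would identify the auxiliary polynomials $Q_0(x), Q_1(x), \ldots, Q_{l+1}(x)$ from Theorem~\ref{main-balanced} with the polynomials $W_{l,j}(x,k)$ defined in the statement. Since the recurrence in Theorem~\ref{main-balanced} reads $Q_{j+2}(x) = x\,Q_{j+1}(x) - c_{l-j}\,Q_j(x)$, and for this tuple we have $c_1 = k$ while $c_2 = c_3 = \cdots = c_l = k-1$, the coefficient $c_{l-j}$ equals $k-1$ whenever $l - j \ge 2$, i.e.\ $j \le l-2$, and equals $k$ exactly when $l-j = 1$, i.e.\ $j = l-1$. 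Reading off the recurrence for $Q_{j+2}$ in terms of $j$: for $0 \le j \le l-2$ we get $Q_{j+2}(x) = x\,Q_{j+1}(x) - (k-1)\,Q_j(x)$, which after reindexing is exactly the middle line of Eq.~\eqref{w_polynomials} for indices $2 \le j \le l$; and for $j = l-1$ we get $Q_{l+1}(x) = x\,Q_l(x) - k\,Q_{l-1}(x)$, matching the last line of Eq.~\eqref{w_polynomials}. Together with $Q_0 = W_{l,0} = 1$ and $Q_1 = W_{l,1} = x$, an immediate induction gives $Q_j(x) = W_{l,j}(x,k)$ for all $0 \le j \le l+1$.

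Next I would compute the exponents $n_{l+1-j} - n_{l-j}$ appearing in Theorem~\ref{main-balanced}. Recall $n_0 = 1$ and $n_j = n_{j-1} c_j$, so here $n_0 = 1$, $n_1 = k$, and $n_j = k(k-1)^{j-1}$ for all $1 \le j \le l$, with the convention $n_{-1} = 0$. Then I would split the product $\prod_{j=1}^{l+1} Q_j(x)^{n_{l+1-j} - n_{l-j}}$ into three parts according to the value of $j$:
\begin{itemize}
\item for $j = l+1$: exponent $n_0 - n_{-1} = 1 - 0 = 1$, giving the factor $W_{l,l+1}(x,k)$;
\item for $j = l$: exponent $n_1 - n_0 = k - 1$, giving the factor $W_{l,l}(x,k)^{k-1}$;
\item for $1 \le j \le l-1$: here $l+1-j \ge 2$ and $l-j \ge 1$, so the exponent is $n_{l+1-j} - n_{l-j} = k(k-1)^{l-j} - k(k-1)^{l-1-j} = k(k-1)^{l-1-j}\bigl((k-1) - 1\bigr) = k(k-2)(k-1)^{l-1-j}$.
\end{itemize}
Assembling these three contributions reproduces exactly the right-hand side of Eq.~\eqref{dendrimer-formula}, which completes the proof.

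Honestly, I expect no serious obstacle here: the result is a specialization of Theorem~\ref{main-balanced}, and the only things to be careful about are the bookkeeping of which entry of the characteristic tuple governs which step of the recurrence (the lone $c_1 = k$ versus the repeated $k-1$'s, and the reversal of indices $c_{l-j}$ in the recurrence), and the explicit evaluation of $n_j$ together with the edge conventions $n_0 = 1$, $n_{-1} = 0$. The mildly delicate point is verifying that the ``generic'' $j$-range in Eq.~\eqref{w_polynomials}, namely $2 \le j \le l$, lines up precisely with the recurrence index range $0 \le j \le l-2$ of Theorem~\ref{main-balanced} after the shift $j \mapsto j+2$, and that the boundary case $j = l-1$ (yielding $Q_{l+1}$ with coefficient $k$) is handled separately; once the index translation is done correctly, everything else is a routine substitution. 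I would also note in passing that the hypothesis $k \ge 2$, $l \ge 1$ is exactly what is needed for the tuple to be well-defined and for all stated exponents to be nonnegative integers.
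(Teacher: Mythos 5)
Your proposal is correct and follows essentially the same route as the paper: identify the characteristic tuple of $d(l,k)$, match the recurrence of Theorem~\ref{main-balanced} (with $c_{l-j}=k-1$ for $j\le l-2$ and $c_1=k$ at the boundary step) to Eq.~\eqref{w_polynomials}, and evaluate the exponents $n_{l+1-j}-n_{l-j}$ using $n_0=1$, $n_j=k(k-1)^{j-1}$, $n_{-1}=0$. Your index bookkeeping is in fact slightly more explicit than the paper's, but the argument is the same.
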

\begin{proof}
Given the fact that $c_1 = k$ and $c_j = k-1$ for all $2 \le j \le l$, Theorem \ref{main-balanced} gives us
$$P_x(T) = \prod_{j = 1}^{l+1} W_{l, j}(x, k)^{n_{l+1-j}-n_{l-j}} \, .$$
Since $n_{-1} = 0, n_0 = 1$ and $n_j = k(k-1)^{j-1}$ for all $1 \le j \le l$, we have $n_0 - n_{-1} = 1, n_1 - n_0 = k-1$ and $n_{j+1} - n_j = k(k-1)^j - k(k-1)^{j-1}$ for all $1 \le j \le l-1$. This leads us to
\begin{align*}
    P_x(T) &= W_{l, l+1}(x, k)^{n_0 - n_{-1}} W_{l, l}(x, k)^{n_1 - n_0} \prod_{j = 1}^{l-1} W_{l, j} (x, k)^{n_{l+1-j}-n_{l-j}}\\
    &= W_{l, l+1}(x, k) W_{l, l}(x, k)^{k-1} \prod_{j = 1}^{l-1} W_{l, j}(x, k)^{k(k-1)^{l-j} - k(k-1)^{l-j-1}}\\
    &= W_{l, l+1}(x, k) W_{l, l}(x, k)^{k-1} \prod_{j = 1}^{l-1} W_{l, j}(x, k)^{k(k-1)^{l-j-1}(k-2)} \, .
\end{align*}
\end{proof}

It is relatively easy to use Theorem \ref{dendrimer-char} to obtain the characteristic polynomial of the dendrimer $d(l, k)$,
where $k \ge 2$ can be treated as an integer parameter, while $l \ge 1$ is some fixed and preferably small, positive integer. 
For example, if we put $l = 1$, we will get a sequence of polynomials
\begin{align*}
    W_{1, 0}(x, k) &= 1 \, ,\\
    W_{1, 1}(x, k) &= x \, ,\\
    W_{1, 2}(x, k) &= x^2 - k \, ,
\end{align*}
which quickly gives us the characteristic polynomial
\begin{equation}\label{l_is_1}
    P_x(d(1, k)) = (x^2 - k) x^{k-1} \, .
\end{equation}
Similarly, by setting $l = 2$ we obtain the sequence of polynomials
\begin{align*}
    W_{2, 0}(x, k) &= 1 \, ,\\
    W_{2, 1}(x, k) &= x \, ,\\
    W_{2, 2}(x, k) &= x^2 - (k-1) \, ,\\
    W_{2, 3}(x, k) &= x^3 - (2k-1)x \, ,
\end{align*}
which leads to the characteristic polynomial
\begin{align}
    \nonumber P_x(d(2, k)) &= [x^3-(2k-1)x][x^2-(k-1)]^{k-1} x^{k(k-2)}\\
    \nonumber &= x^{k^2 - 2k + 1} (x^2 - k + 1)^{k-1} (x^2 - 2k + 1)\\
    \label{l_is_2} &= x^{(k-1)^2} (x^2 - k + 1)^{k-1} (x^2 - 2k + 1) \, .
\end{align}
Further for $l=3$ we get the sequence of polynomials
\begin{align*}
    W_{3, 0}(x, k) &= 1 \, ,\\
    W_{3, 1}(x, k) &= x \, ,\\
    W_{3, 2}(x, k) &= x^2 - (k-1) \, ,\\
    W_{3, 3}(x, k) &= x^3 - (2k-2)x \, ,\\
    W_{3, 4}(x, k) &= x^4 - (3k-2)x^2 + k(k-1) \, ,
\end{align*}
which yields the characteristic polynomial
\begin{align*}
    P_x(d(3, k)) ={} & [x^4 - (3k-2)x^2 + k(k-1)][x^3 - (2k-2)x]^{k-1}\\
    			  & [x^2 - (k-1)]^{k(k-2)} x^{k(k-2)(k-1)}\\
		    ={} & [x^4 - (3k-2)x^2 + k(k-1)](x^2 - 2k + 2)^{k-1}\\
		           & (x^2 - k + 1)^{k(k-2)} x^{k(k-2)(k-1) + (k-1)}\\
		    ={} & [x^4 - (3k-2)x^2 + k(k-1)](x^2 - 2k + 2)^{k-1}\\
                              & (x^2 - k + 1)^{k(k-2)} x^{(k-1)^3}.
\end{align*}

\begin{remark}
Note that \cite[Theorem 3.5]{bokhary_energy} gives a slightly inaccurate expression for $P_x(d(3,k))$,
in which the correct factor $x^4 - (3k-2)x^2 + k(k-1)$ above is replaced by an incorrect factor $x^4 - 2(k+1)x^2 + 4(k-1)$. 
\end{remark}

\section{Eigenvalues of dendrimers}
\label{sc-dendrimer-2}

Theorem \ref{dendrimer-char} can be used to determine the eigenvalues of the given dendrimer $d(l, k)$. If $k > 2$, then a real number belongs to $\sigma^*(d(l, k))$ if and only if it represents a root of some polynomial from the sequence $W_{l, 1}(x, k), W_{l, 2}(x, k), \ldots, W_{l, l+1}(x, k)$, given the fact that $c_j > 1$ for all $1 \le j \le l$. A special case occurs when $k = 2$, since this would lead to $c_2 = c_3 = \cdots = c_l = 1$. In this situation, we would have that $\sigma^*(d(l, k))$ is composed solely of real numbers which are a root of $W_{l, l+1}(x, k)$ or $W_{l, l}(x, k)$.
\begin{theorem}\label{dendrimer-spectrum}
    For any $k \ge 3$ and $l \ge 1$, we have
    $$
    \sigma^*(d(l, k)) = \bigcup_{j = 1}^{l+1} \, \{ x \colon W_{l, j}(x, k) = 0\}.
    $$
    If $k = 2$ and $l \ge 1$, then we get
    $$
    \sigma^*(d(l, 2)) = \{ x \colon W_{l, l+1}(x, k) = 0 \vee W_{l, l}(x, k) = 0\}.
    $$
\end{theorem}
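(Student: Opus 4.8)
The plan is to derive this directly from Theorem~\ref{main-corollary} together with the Remark that follows it, specialized to the characteristic tuple of a dendrimer. Recall from Section~\ref{sc-dendrimer-1} that viewing $d(l,k)$ as a balanced tree gives the characteristic tuple $(c_1,c_2,\dots,c_l)$ with $c_1=k$ and $c_2=c_3=\cdots=c_l=k-1$, and that with this tuple the polynomials $Q_j(x)$ from Theorem~\ref{main-balanced} coincide with $W_{l,j}(x,k)$ for all $0\le j\le l+1$; this is exactly the identification already used in the proof of Theorem~\ref{dendrimer-char} (one checks the three branches $j=0$, $2\le j\le l$, and $j=l+1$ of the recurrence in Theorem~\ref{main-balanced} against Eq.~\eqref{w_polynomials}). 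Hence Theorem~\ref{main-corollary} yields
\[
\sigma^*(d(l,k)) = \bigcup_{j\in\Phi}\{x\colon W_{l,j}(x,k)=0\},
\]
where $\Phi$ is the set of indices $1\le j\le l+1$ with $n_{l+1-j}>n_{l-j}$. It therefore suffices to compute $\Phi$ in the two cases $k\ge 3$ and $k=2$.

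To do this I would invoke the Remark after Theorem~\ref{main-corollary}: the index $j=l+1$ always lies in $\Phi$, and for $j\le l$ the membership $j\in\Phi$ is equivalent to $c_{l+1-j}\neq 1$. When $k\ge 3$, every entry of the tuple satisfies $c_i\ge 2$, so $c_{l+1-j}\neq 1$ for all $1\le j\le l$; thus $\Phi=\{1,2,\dots,l+1\}$ and the first claimed formula follows at once. When $k=2$, we have $c_1=2$ but $c_2=\cdots=c_l=1$, so for $1\le j\le l$ the condition $c_{l+1-j}\neq 1$ holds precisely when $l+1-j=1$, i.e.\ $j=l$; together with $j=l+1$ this gives $\Phi=\{l,l+1\}$, which is exactly the second claimed formula.

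There is essentially no hard step here, since the result is a bookkeeping specialization of Theorem~\ref{main-corollary}; the one place to be careful is the index translation $j\leftrightarrow c_{l+1-j}$ and, relatedly, the degenerate case $k=2$, where $d(l,2)$ is just the path on $2l+1$ vertices and almost all of the factors $W_{l,j}$ drop out of the spectrum. In particular one should double-check that it is the root $c_1=k=2\neq 1$ that keeps precisely the index $j=l$ (and not some other index) in $\Phi$. It may be worth adding a sentence observing that the $k=2$ statement is consistent with the known spectrum of $P_{2l+1}$, as a sanity check. No estimates or computations beyond what Theorems~\ref{main-balanced}--\ref{dendrimer-char} already provide are required.
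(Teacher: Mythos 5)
Your proposal is correct and follows essentially the same route as the paper, which justifies Theorem~\ref{dendrimer-spectrum} in the paragraph preceding it by specializing Theorem~\ref{main-corollary} (and its Remark) to the dendrimer's characteristic tuple, noting that $c_j>1$ for all $j$ when $k\ge 3$ while $c_2=\cdots=c_l=1$ when $k=2$. Your index bookkeeping, including the identification of $\Phi=\{l,l+1\}$ in the $k=2$ case, matches the paper's argument exactly.
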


A natural question is whether these results can be made explicit. 
The answer to this question is affirmative, as previously shown by one of the present authors in~\cite{stevanovic_energy}. 
Let $E_0(x, a), E_1(x, a), E_2(x, a), \ldots$ be a sequence of polynomials defined via the recurrence relation
\begin{align}
\begin{split}\label{dickson_def}
    E_0(x, a) &= 1 \, ,\\
    E_1(x, a) &= x \, ,\\
    E_j(x, a) &= x \, E_{j-1}(x, a) - a \, E_{j-2}(x, a) \qquad \mbox{for all $\, j \ge 2$} \, .\\
\end{split}
\end{align}
We shall call these polynomials the Dickson polynomials of the second kind, as done so in \cite[pp.\ 9--10]{dickson_polynomials}. 
By comparing Eqs.\ (\ref{dickson_def}) and~(\ref{w_polynomials}), 
we see that the polynomials $W_{l, 0}(x, k), W_{l, 1}(x, k), \ldots, W_{l, l}(x, k)$ actually represent the Dickson polynomials of the second kind $E_{0}(x, k-1), E_{1}(x, k-1), \ldots, E_{l}(x, k-1)$, respectively. This means that the polynomial $W_{l, j}(x, k)$ must have $j$ distinct simple roots 
$2\sqrt{k-1}\cos\left(\frac{1}{j+1}\pi\right), 
  2\sqrt{k-1}\cos\left(\frac{2}{j+1}\pi\right), 
  \ldots, 
  2\sqrt{k-1}\cos\left(\frac{j}{j+1}\pi\right)$, 
for each $0 \le j \le l$ (see, for example, \cite[pp.\ 9--10]{dickson_polynomials}).

Note that the roots of $W_{l, l+1}(x, k)$ cannot be found as easily, given the fact that this polynomial is not a Dickson polynomial of the second kind, unlike all of its predecessors. This irregularity occurs due to the fact that $W_{l, l+1}(x, k)$ is defined via a recurrence relation which is slightly different from all of the previous ones.

By taking these facts into consideration, we reach the following theorem.

\begin{theorem}\label{spectra}
    For any $k \ge 3$ and $l \ge 1$, we have
    \begin{align*}
    \sigma^*(d(l, k)) ={} & \left\{ 2\sqrt{k-1} \cos\left(\frac{h}{j+1}\pi\right) \colon 1 \le h \le j \le l \right\}\\
    & \cup \, \{ x \in \mathbb{R} \colon W_{l, l+1}(x, k) = 0 \} \, .
    \end{align*}
    On the other hand, if $k = 2$ and $l \ge 1$, then
    $$\sigma^*(d(l, 2)) = \left\{ 2\cos\left(\frac{h}{2l+2}\pi\right) \colon 1 \le h \le 2l+1 \right\} \, .$$
\end{theorem}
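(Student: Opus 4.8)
The plan is to derive this from Theorem~\ref{dendrimer-spectrum} by identifying the roots of each polynomial $W_{l,j}(x,k)$ in the union. The key observation, already recorded in the excerpt, is that for $0 \le j \le l$ the polynomial $W_{l,j}(x,k)$ coincides with the Dickson polynomial of the second kind $E_j(x,k-1)$, whose $j$ simple roots are $2\sqrt{k-1}\cos\!\left(\frac{h}{j+1}\pi\right)$ for $1 \le h \le j$. So the contribution of $W_{l,1},\dots,W_{l,l}$ to $\sigma^*(d(l,k))$ is exactly the set $\left\{ 2\sqrt{k-1}\cos\!\left(\frac{h}{j+1}\pi\right) : 1 \le h \le j \le l \right\}$; the polynomial $W_{l,l+1}(x,k)$ is handled separately since its recurrence uses the coefficient $k$ rather than $k-1$ in the last step, so it is not a Dickson polynomial and its roots are simply left in the implicit form $\{x \in \mathbb{R} : W_{l,l+1}(x,k) = 0\}$. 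Combining these two pieces with Theorem~\ref{dendrimer-spectrum} gives the first displayed equality for $k \ge 3$.

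For the case $k = 2$, the strategy is different: here Theorem~\ref{dendrimer-spectrum} already tells us that $\sigma^*(d(l,2))$ consists precisely of the roots of $W_{l,l}(x,2)$ together with those of $W_{l,l+1}(x,2)$. Since $k-1 = 1$, the polynomial $W_{l,l}(x,2) = E_l(x,1)$ is the Dickson polynomial of the second kind with parameter $1$, i.e.\ essentially the Chebyshev polynomial of the second kind, with roots $2\cos\!\left(\frac{h}{l+1}\pi\right)$ for $1 \le h \le l$. For the final polynomial, I would substitute $k = 2$ into the recurrence \eqref{w_polynomials}: the last step becomes $W_{l,l+1}(x,2) = x\,W_{l,l}(x,2) - 2\,W_{l,l-1}(x,2)$, and I would show this equals $E_{l+1}(x,1) - E_{l-1}(x,1)$. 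Using the standard trigonometric identity for Dickson/Chebyshev polynomials of the second kind, $E_m(2\cos\theta, 1) = \frac{\sin((m+1)\theta)}{\sin\theta}$, one computes $E_{l+1}(2\cos\theta,1) - E_{l-1}(2\cos\theta,1) = \frac{\sin((l+2)\theta) - \sin(l\theta)}{\sin\theta} = \frac{2\cos((l+1)\theta)\sin\theta}{\sin\theta} = 2\cos((l+1)\theta)$, so the roots of $W_{l,l+1}(x,2)$ are $2\cos\theta$ with $(l+1)\theta$ an odd multiple of $\frac{\pi}{2}$, that is, $2\cos\!\left(\frac{h}{2l+2}\pi\right)$ for odd $h$ with $1 \le h \le 2l+1$.

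It then remains to merge the two sets for $k = 2$. The roots of $W_{l,l}(x,2)$ are $2\cos\!\left(\frac{h}{l+1}\pi\right) = 2\cos\!\left(\frac{2h}{2l+2}\pi\right)$ for $1 \le h \le l$, i.e.\ the values $2\cos\!\left(\frac{m}{2l+2}\pi\right)$ with $m$ even, $2 \le m \le 2l$; the roots of $W_{l,l+1}(x,2)$ give the same expression with $m$ odd, $1 \le m \le 2l+1$. Their union is therefore $\left\{ 2\cos\!\left(\frac{m}{2l+2}\pi\right) : 1 \le m \le 2l+1 \right\}$, as claimed. One should check there is no overlap and no double counting — the even and odd index ranges are disjoint, and all these cosine values are distinct since $\frac{m}{2l+2}\pi \in (0,\pi)$ is strictly decreasing in $m$ — which confirms that $\sigma^*(d(l,2))$ has exactly $2l+1$ elements.

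The main obstacle I anticipate is purely in the $k = 2$ computation: correctly verifying that $W_{l,l+1}(x,2) = E_{l+1}(x,1) - E_{l-1}(x,1)$ and then executing the trigonometric simplification without sign or index errors, together with the bookkeeping needed to see that the even-indexed roots (from $W_{l,l}$) and odd-indexed roots (from $W_{l,l+1}$) interleave perfectly to fill out all integers $1,\dots,2l+1$. The $k \ge 3$ case is essentially immediate once the Dickson polynomial identification is invoked, so no real difficulty is expected there.
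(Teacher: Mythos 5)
Your proposal is correct. For $k \ge 3$ it is essentially identical to the paper's argument: both invoke Theorem~\ref{dendrimer-spectrum} and the identification $W_{l,j}(x,k) \equiv E_j(x,k-1)$ for $0 \le j \le l$ to make the first $l$ root sets explicit while leaving the roots of $W_{l,l+1}(x,k)$ implicit. For $k = 2$, however, you take a genuinely different route. The paper simply observes that $d(l,2)$ is the path on $2l+1$ vertices and quotes its known spectrum from the literature, whereas you derive the same set from the factorization itself: you verify $W_{l,l+1}(x,2) = E_{l+1}(x,1) - E_{l-1}(x,1)$, use $E_m(2\cos\theta,1) = \sin((m+1)\theta)/\sin\theta$ to reduce this to $2\cos((l+1)\theta)$, and then interleave the odd-index roots of $W_{l,l+1}$ with the even-index roots of $W_{l,l}$ to fill out all of $1,\dots,2l+1$. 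Your computation checks out (the identity $\sin((l+2)\theta)-\sin(l\theta)=2\cos((l+1)\theta)\sin\theta$ and the root count $l+1$ plus $l$ distinct values are all right), and it has the advantage of being self-contained and of exhibiting explicitly which eigenvalues of the path come from which factor of the characteristic polynomial; the paper's citation is shorter but opaque about that correspondence. Note also that your intermediate identity $W_{l,l+1}(x,k)=E_{l+1}(x,k-1)-E_{l-1}(x,k-1)$ is exactly the one the paper proves (for general $k$) right after this theorem when introducing the Geronimus polynomials, so your $k=2$ argument is really a special case of that later observation pulled forward.
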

\begin{proof}
    The dendrimer $d(l, 2)$ is just a path graph composed of $2l+1$ vertices. 
    It is known (see, for example, \cite[pp.\ 18]{spectra_of_graphs}) that the spectrum of this graph is composed of the real numbers $2\cos\left(\frac{1}{2l+2}\pi\right), 2\cos\left(\frac{2}{2l+2}\pi\right), \ldots,  2\cos\left(\frac{2l+1}{2l+2}\pi\right)$. 
    
    For $k \ge 3$, from $W_{l, j}(x, k) \equiv E_j(x, k-1)$ for $0 \le j \le l$, we have
    \begin{equation}\label{some_sets}
        \bigcup_{j = 1}^{l} \, \{ x \in \mathbb{R} \colon W_j(x) = 0 \} = \left\{ 2\sqrt{k-1} \cos\left(\frac{h}{j+1}\pi\right) \colon 1 \le h \le j \le l \right\} .
    \end{equation}
    %We also know that
    %\begin{align*}
    %    W_{l+1}(x) &= x \, W_l(x) - k \, W_{l-1}(x)\\
    %    &= x \, E_l(x, k-1) - (k-1) \, E_{l-1}(x, k-1) - E_{l-1}(x, k-1)\\
    %    &= E_{l+1}(x, k-1) - E_{l-1}(x, k-1),
    %\end{align*}
    %so that
    %$$ 
    %\{ x \in \mathbb{R} \colon W_{l+1}(x) = 0 \} = \{ x \in \mathbb{R} \colon E_{l+1}(x; k-1) = %E_{l-1}(x; k-1) \}.
    %$$
\end{proof}

Despite the fact that the roots of $W_{l, l+1}(x, k)$ cannot explicitly be found, a relatively good approximation can be made. Moreover, $W_{l, l+1}(x, k)$ represents a Geronimus polynomial of degree $l+1$ (see, for example, \cite{geronimus}). This fact can be shown if we notice that
\begin{align*}
    W_{l, l+1}(x, k) &= x \, W_{l, l}(x, k) - k \, W_{l, l-1}(x, k)\\
    &= x \, E_l(x, k-1) - (k-1) \, E_{l-1}(x, k-1) - E_{l-1}(x, k-1)\\
    &= E_{l+1}(x, k-1) - E_{l-1}(x, k-1) \, ,
\end{align*}
for each $l \ge 1$.

For a fixed parameter $a \in \mathbb{R}$, we will use $G_j(x, a)$ to denote the sequence of Geronimus polynomials defined via
\begin{align*}
    G_0(x, a) &= 1 \, ,\\
    G_1(x, a) &= x \, ,\\
    G_2(x, a) &= x^2 - a \, ,\\
    G_j(x, a) &= x G_{j-1}(x, a) - (a-1)G_{j-2}(x, c) \qquad \mbox{for all $\, j \ge 3$} \, .
\end{align*}
It is easily proven via mathematical induction that $G_j(x, a) = E_j(x, a-1) - E_{j-2}(x, a-1)$ for each $j \ge 2$, which ultimately shows that $W_{l, l+1}(x, k) \equiv G_{l+1}(x, k)$. This observation leads to the following lemma.

\begin{lemma}\label{geronimus_approx}
    For $k \ge 3$, the polynomial $W_{l, l+1}(x, k)$ has $l+1$ simple real roots determined by the set
    $$\{ \pm \alpha_1, \pm \alpha_2, \ldots, \pm \alpha_{\lfloor (l+1)/2 \rfloor} \}$$
    if $l+1$ is even, or
    $$\{ \pm \alpha_1, \pm \alpha_2, \ldots, \pm \alpha_{\lfloor (l+1)/2 \rfloor} \} \cup \{ 0 \}$$
    if $l+1$ is odd, where the numbers $\alpha_1, \alpha_2, \ldots, \alpha_{\lfloor (l+1)/2 \rfloor}$ form a strictly decreasing positive sequence which satisfies
    $$ 2 \sqrt{k-1} \cos \left( \dfrac{j-0.5}{l+2} \pi \right) > \alpha_j > 2 \sqrt{k-1} \cos \left( \dfrac{j+0.5}{l+2} \pi \right)$$
    for each $j \in \{1, 2, 3, \ldots, \lfloor (l+1)/2 \rfloor \}$.
\end{lemma}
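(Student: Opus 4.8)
The plan is to exploit the identity $W_{l,l+1}(x,k) \equiv G_{l+1}(x,k) = E_{l+1}(x,k-1) - E_{l-1}(x,k-1)$, together with the explicit trigonometric description of the roots of the Dickson polynomials. First I would substitute $x = 2\sqrt{k-1}\cos\theta$ for $\theta \in (0,\pi)$. Since $E_j(2\sqrt{k-1}\cos\theta, k-1) = (k-1)^{j/2}\,\dfrac{\sin((j+1)\theta)}{\sin\theta}$ (the standard Chebyshev-type closed form for Dickson polynomials of the second kind), we get
\begin{equation*}
W_{l,l+1}(2\sqrt{k-1}\cos\theta, k) = \frac{(k-1)^{(l-1)/2}}{\sin\theta}\Bigl[(k-1)\sin((l+2)\theta) - \sin(l\theta)\Bigr].
\end{equation*}
So, up to a nonvanishing factor on $(0,\pi)$, the roots in the open interval $(-2\sqrt{k-1}, 2\sqrt{k-1})$ correspond to solutions $\theta\in(0,\pi)$ of $g(\theta) := (k-1)\sin((l+2)\theta) - \sin(l\theta) = 0$. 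I would then argue that all $l+1$ roots of $W_{l,l+1}$ are real and lie in this interval: this follows because $G_{l+1}$ is a Geronimus (hence orthogonal) polynomial, so its roots are real, simple, and lie in the convex hull of the support of the orthogonality measure, which is $[-2\sqrt{k-1}, 2\sqrt{k-1}]$; alternatively one can verify directly that $g$ already produces $l+1$ sign changes on $(0,\pi)$, accounting for all roots by degree.

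Next I would locate the roots of $g$ by a sign-change (intermediate value) argument at the sample points $\theta = \dfrac{m}{l+2}\pi$. At such a point $\sin((l+2)\theta) = 0$, so $g\left(\dfrac{m}{l+2}\pi\right) = -\sin\left(\dfrac{ml}{l+2}\pi\right) = -\sin\left(m\pi - \dfrac{2m}{l+2}\pi\right) = (-1)^{m+1}\sin\left(\dfrac{2m}{l+2}\pi\right)$. For $1 \le m \le \lfloor (l+1)/2\rfloor$ (and a little care near $m \approx (l+2)/2$) the factor $\sin\left(\frac{2m}{l+2}\pi\right)$ is nonzero, so $g$ alternates in sign along these sample points, forcing a root of $g$ strictly between consecutive ones, i.e. a root $\theta_j^* \in \left(\dfrac{j-0.5}{l+2}\pi,\ \dfrac{j+0.5}{l+2}\pi\right)$ for each relevant $j$ — wait, that is not quite the bracketing I want; more precisely, comparing $g$ at $\theta = \dfrac{j}{l+2}\pi$ against the endpoints $\theta = \dfrac{j\mp 1}{l+2}\pi$ shows the root of $g$ nearest to $\dfrac{j}{l+2}\pi$ lies in $\left(\dfrac{j-1}{l+2}\pi,\ \dfrac{j+1}{l+2}\pi\right)$, and a monotonicity/uniqueness refinement (for instance, showing $g$ is monotone on each such subinterval for $k$ large, or using interlacing of the roots of $E_{l+1}$ and the perturbation $-E_{l-1}$) tightens this to the claimed half-integer brackets $\left(\dfrac{j-0.5}{l+2}\pi,\ \dfrac{j+0.5}{l+2}\pi\right)$. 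Translating back via $x = 2\sqrt{k-1}\cos\theta$ (a decreasing function of $\theta$) converts each $\theta$-bracket into the stated bound
\begin{equation*}
2\sqrt{k-1}\cos\left(\frac{j-0.5}{l+2}\pi\right) > \alpha_j > 2\sqrt{k-1}\cos\left(\frac{j+0.5}{l+2}\pi\right).
\end{equation*}

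The symmetry of the root set is immediate once I observe that $W_{l,l+1}(x,k) = G_{l+1}(x,k)$ is an even or odd polynomial according to the parity of $l+1$: both $E_{l+1}(x,k-1)$ and $E_{l-1}(x,k-1)$ have the same parity as $l+1$, so the difference does too. Hence the nonzero roots come in $\pm$ pairs, and $0$ is a root exactly when $l+1$ is odd; counting $2\lfloor (l+1)/2\rfloor$ (plus one if $l+1$ is odd) recovers all $l+1$ roots, and simplicity follows from the orthogonal-polynomial property. Labelling the positive roots in decreasing order as $\alpha_1 > \alpha_2 > \cdots > \alpha_{\lfloor (l+1)/2\rfloor}$ matches the indexing forced by the $\theta$-brackets above.

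The main obstacle I anticipate is the \emph{tightening} of the bracket from the crude half-width $\dfrac{1}{l+2}\pi$ (which the bare sign-change argument at multiples of $\dfrac{\pi}{l+2}\pi$ gives) down to the asserted half-width $\dfrac{0.5}{l+2}\pi$ centered at $\dfrac{j}{l+2}\pi$. This requires either a careful second-order sign analysis of $g$ at the quarter-points $\theta = \dfrac{j\pm 0.5}{l+2}\pi$, where one must check that $\sin((l+2)\theta) = \pm 1$ dominates the competing $\sin(l\theta)$ term once $k \ge 3$ (so that $(k-1)\ge 2$), or an interlacing argument comparing the roots of $W_{l,l+1}$ with those of $W_{l,l+2}$-type neighbours. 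Everything else is routine: the Dickson closed form, the evenness/oddness, and the orthogonal-polynomial facts are all standard and quotable.
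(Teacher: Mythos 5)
Your proposal is correct in substance, but it takes a genuinely different route from the paper: the paper simply identifies $W_{l,l+1}(x,k)$ with the Geronimus polynomial $G_{l+1}(x,k)$ and \emph{quotes} the root-bracketing inequality from the cited reference of Linial, Magen and Naor, spending its own effort only on the parity/symmetry bookkeeping (evenness or oddness of $G_{l+1}$, hence the $\pm$ pairing and the root at $0$ when $l+1$ is odd) --- which you handle the same way. You instead reprove the bracketing from scratch via the substitution $x = 2\sqrt{k-1}\cos\theta$ and the closed form $E_j(2\sqrt{k-1}\cos\theta,k-1) = (k-1)^{j/2}\sin((j+1)\theta)/\sin\theta$, reducing everything to sign changes of $g(\theta) = (k-1)\sin((l+2)\theta) - \sin(l\theta)$. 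What you label as ``the main obstacle'' is in fact the whole proof and is a one-line verification: evaluating $g$ directly at the half-integer points $\theta_j^{\pm} = \frac{j\pm 0.5}{l+2}\pi$ gives $\sin((l+2)\theta_j^{\pm}) = (-1)^{j-1}$ and $(-1)^{j}$ respectively, and since $k\ge 3$ forces $k-1\ge 2 > 1 \ge |\sin(l\theta)|$, the sign of $g(\theta_j^{\pm})$ is exactly that of the dominant term; the intermediate value theorem then places one root of $g$ in each of the $l+1$ pairwise interior-disjoint intervals $\left(\theta_j^{-},\theta_j^{+}\right)$, $1\le j\le l+1$, all contained in $(0,\pi)$, and a degree count shows these are all the roots and that they are simple. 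Your preliminary detour through the integer sample points $\theta = \frac{m}{l+2}\pi$ (which only yields the crude width-$\frac{\pi}{l+2}$ bracket, and where your sign $(-1)^{m+1}$ should be $(-1)^m$) is unnecessary and can be deleted; so can the appeal to orthogonality for realness and simplicity, since the half-integer sign argument already accounts for all $l+1$ roots. The trade-off: the paper's proof is shorter but rests entirely on an external citation, whereas your argument is self-contained, makes the role of the hypothesis $k\ge 3$ transparent, and would survive even if the reader cannot access the quoted source.
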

\begin{proof}
     Given the fact that $W_{l, l+1}(x, k) \equiv G_{l+1}(x, k)$, we know from \cite{geronimus} that for $k \ge 3$ the polynomial $W_{l, l+1}(x, k)$ must have $l+1$ distinct real roots $\alpha_1, \alpha_2, \ldots, \alpha_{l+1}$ such that
     \begin{equation}\label{roots_geronimus}
     2 \sqrt{k-1} \cos \left( \dfrac{j-0.5}{l+2} \pi \right) > \alpha_j > 2 \sqrt{k-1} \cos \left( \dfrac{j+0.5}{l+2} \pi \right)
     \end{equation}
     for each $j \in \{1, 2, 3, \ldots, l+1 \}$. Since the cosine is a strictly decreasing function on $[0, \pi]$, Eq.\ (\ref{roots_geronimus}) immediately shows that
     $$ \alpha_1 > \alpha_2 > \alpha_3 > \cdots > \alpha_l > \alpha_{l+1} \, .$$
     Also, due to the fact that the cosine takes positive values on $[0, \pi/2)$ and negative values on $(\pi/2, \pi]$, we obtain
     \begin{equation}\label{pos_ger}
     \alpha_1 > \alpha_2 > \cdots > \alpha_{(l+1)/2} > 0 > \alpha_{(l+1)/2+1} > \cdots > \alpha_l > \alpha_{l+1}
     \end{equation}
     if $l+1$ is even, and
     \begin{equation}\label{neg_ger}
     \alpha_1 > \alpha_2 > \cdots > \alpha_{l/2} > 0 > \alpha_{l/2+2} > \cdots > \alpha_l > \alpha_{l+1}
     \end{equation}
     if $l+1$ is odd.
     
     It is trivial to prove that $G_j(x, k)$ is an even polynomial for each even $j$ and an odd polynomial for each odd $j$. If $l+1$ is even, this observation and Eq.\ (\ref{pos_ger}) together give that $\alpha_{l+2-j} = -\alpha_j$ for each $1 \le j \le (l+1)/2$. This implies that the roots of $W_{l, l+1}(x, k)$ are described via set $\{ \pm \alpha_1, \pm \alpha_2, \ldots, \pm \alpha_{\lfloor (l+1) / 2 \rfloor} \}$, where $\alpha_1 > \alpha_2 > \cdots > \alpha_{\lfloor (l+1) / 2 \rfloor} > 0$.
     
     If $l+1$ is odd, then $0$ must be a root of $W_{l, l+1}(x, k)$, which implies $\alpha_{l/2+1} = 0$. On the other hand, Eq.\ (\ref{neg_ger}) gives $\alpha_{l+2-j} = -\alpha_j$ for each $1 \le j \le l/2$. We conclude that in this case the roots of $W_{l, l+1}(x, k)$ can be described by the set $\{ \pm \alpha_1, \pm \alpha_2, \ldots, \pm \alpha_{\lfloor (l+1) / 2 \rfloor} \} \cup \{ 0 \}$, where $\alpha_1 > \alpha_2 > \cdots > \alpha_{\lfloor (l+1) / 2 \rfloor} > 0$.
\end{proof}

Lemma \ref{geronimus_approx} shall prove to be quite useful while approximating the energy of dendrimers in the following section.

\section{Energy of dendrimers}
\label{sc-dendrimer-3}

Here we discuss the computation of the energy of dendrimers. Due to the fact that the roots of the Geronimus polynomials cannot explicitly be found, we are unable to obtain an exact expression for the energy of a dendrimer $d(l, k)$ whenever $k \ge 3$. Instead, we compute a reasonable approximation of $E(d(l, k))$ that covers all of the dendrimers whenever $k \ge 3$ and $l \ge 1$, together with the exact value of $E(d(l, 2))$ for each $l \ge 1$. Afterwards, we inspect the asymptotic behavior of $E(d(l, k))$ when analyzed as a function of two variables $l \in \mathbb{N}$ and $k \in \mathbb{N} \setminus \{1, 2\}$. In the end, we give another approximative formula of $E(d(l, k))$ when $k \ge 3$, which reflects the asymptotic behavior of the function. To begin with, we state the following theorem:

\allowdisplaybreaks
\begin{theorem}\label{energy_approx}
    For any $k \ge 3$, the energy of the dendrimer $d(l, k)$ can be approximated in the following manner:
    \begin{align}\label{complicated_l3_1}
        E(d(l, k)) &< \sum_{j = 0}^{l-1} f_j (k-1)^{(l-1/2)-j} + 2(k-1)^{1/2} \, ,\\
        \label{complicated_l3_2}
        E(d(l, k)) &> \sum_{j = 0}^{l-1} f_j (k-1)^{(l-1/2)-j} - 2.4(k-1)^{1/2} \, ,
    \end{align}
    where
    $$f_j = \begin{cases}
        2 \csc\left( \dfrac{\pi}{2j+6} \right) - 2\csc\left( \dfrac{\pi}{2j+2} \right), & 2 \mid j \, ,\\
        2 \cot\left( \dfrac{\pi}{2j+6} \right) - 2\cot\left( \dfrac{\pi}{2j+2} \right), & 2 \nmid j \, .
    \end{cases}$$
    Also, the energy of the dendrimer $d(l, 2)$ can be computed via:
    \begin{equation}\label{simple_l2}
        E(d(l, 2)) = 2 \left( \cot\left( \dfrac{\pi}{4l+4} \right) - 1\right) .
    \end{equation}
\end{theorem}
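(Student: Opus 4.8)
The energy is the sum of absolute values of eigenvalues, so the plan is to use the semi-factorized characteristic polynomial of Theorem~\ref{dendrimer-char} together with the explicit eigenvalue descriptions of Theorem~\ref{spectra} and the Geronimus bounds of Lemma~\ref{geronimus_approx}. Concretely, multiplicities in \eqref{dendrimer-formula} tell us that
$$E(d(l,k)) = E(W_{l,l+1}) + (k-1)\,E(W_{l,l}) + \sum_{j=1}^{l-1} k(k-2)(k-1)^{l-1-j}\, E(W_{l,j}),$$
where $E(W_{l,j})$ denotes the sum of absolute values of the roots of $W_{l,j}(x,k)$. For $1 \le j \le l$ the polynomial $W_{l,j}$ is the Dickson polynomial $E_j(x,k-1)$, whose roots are $2\sqrt{k-1}\cos\frac{h\pi}{j+1}$, $1 \le h \le j$; hence $E(W_{l,j}) = 2\sqrt{k-1}\sum_{h}\lvert\cos\frac{h\pi}{j+1}\rvert$, and this trigonometric sum collapses to a cosecant or cotangent depending on the parity of $j$ (pairing $h$ with $j+1-h$). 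So the contributions from $W_{l,1},\dots,W_{l,l}$ can be computed \emph{exactly}, and only $E(W_{l,l+1})$ needs estimating.

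**Carrying it out.** First I would record the closed form $\sum_{h=1}^{j}\lvert\cos\frac{h\pi}{j+1}\rvert = \cot\frac{\pi}{2(j+1)}$ for odd $j$ and $\csc\frac{\pi}{2(j+1)}-1$ for even $j$ (a routine telescoping/geometric-sum computation), and likewise the sum over $h$ with the central term handled separately; this is exactly what produces the constants in the definition of $f_j$: the $W_{l,m}$-contribution with multiplicity comparable to $(k-1)^{l-1-m}$ matched against the $W_{l,m+2}$-contribution gives the difference of cosecants or cotangents, after reindexing $j = m-1$ and noting $k(k-2) = (k-1)^2 - 1$ so that $k(k-2)(k-1)^{l-1-m} = (k-1)^{l+1-m} - (k-1)^{l-1-m}$; summing these telescopes the "$-1$" terms and the lower-order powers cleanly, leaving $\sum_{j=0}^{l-1} f_j (k-1)^{(l-1/2)-j}$ plus boundary terms. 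The $k=2$ case is separate and immediate: $d(l,2)$ is the path $P_{2l+1}$, whose energy is the known value $2(\cot\frac{\pi}{4l+4}-1)$ (or re-derive it directly from \eqref{simple_l2} as the $j=2l+1$ instance of the odd-$j$ cosecant/cotangent identity, since then all Dickson contributions merge).

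**The main obstacle.** The delicate part is bounding $E(W_{l,l+1})$, the sum of absolute values of the Geronimus roots, since these are not available in closed form. Here I would invoke Lemma~\ref{geronimus_approx}: the roots are $\pm\alpha_1,\dots,\pm\alpha_{\lfloor(l+1)/2\rfloor}$ (plus $0$ when $l+1$ is odd), so $E(W_{l,l+1}) = 2\sum_{j}\alpha_j$, and each $\alpha_j$ is sandwiched between $2\sqrt{k-1}\cos\frac{(j-0.5)\pi}{l+2}$ and $2\sqrt{k-1}\cos\frac{(j+0.5)\pi}{l+2}$. Summing these bounds gives $E(W_{l,l+1})$ between two explicit trigonometric sums, each of which again telescopes to cosecant/cotangent expressions that are within $O(\sqrt{k-1})$ of the relevant $f$-type term; the residual discrepancy between the Geronimus bound and the ideal value is absorbed into the additive $2(k-1)^{1/2}$ (upper) and $-2.4(k-1)^{1/2}$ (lower) error terms, which is where one must be somewhat careful with the constants $2$ and $2.4$ — they come from bounding $\lvert\cot\frac{\pi}{l+2}-\cot\frac{\pi}{l+4}\rvert$-type quantities and the off-by-half-index shift uniformly in $l$. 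I expect the bulk of the work to be verifying that these crude constants genuinely dominate the accumulated error for all $l \ge 1$, which likely needs a short case check for small $l$ plus a monotonicity argument for large $l$.
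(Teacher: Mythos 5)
Your proposal follows essentially the same route as the paper: the multiplicity decomposition from Theorem~\ref{dendrimer-char}, the exact cosecant/cotangent evaluation of the Dickson root sums, the rewriting $k(k-2)=(k-1)^2-1$ to telescope into differences $\Psi(W_{l,j+1})-\Psi(W_{l,j-1})$ yielding the $f_j$, and the Geronimus sandwich of Lemma~\ref{geronimus_approx} for the one non-Dickson factor, with the constants $2$ and $2.4$ absorbing the residual error. The only slip is that your closed form for odd $j$ should read $\cot\left(\frac{\pi}{2j+2}\right)-1$ rather than $\cot\left(\frac{\pi}{2j+2}\right)$, but as you note the $-1$ terms cancel in the telescoping, so this does not affect the argument.
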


In order to make the proof of Theorem \ref{energy_approx} easier to follow, we shall introduce and prove two auxiliary lemmas. First of all, let $\Psi(W_{l, j}(x, k))$ denote the sum of absolute values of all the roots of the polynomial $W_{l, j}(x, k)$.

\begin{lemma}\label{all_wj_exact}
    For each $0 \le j \le l$, we have
    $$\Psi(W_{l, j}(x, k)) = 2 \sqrt{k-1} \left( \cot\left( \dfrac{\pi}{2j+2} \right) - 1 \right)$$
    if $j$ is odd, and
    $$\Psi(W_{l, j}(x, k)) = 2 \sqrt{k-1} \left( \csc\left( \dfrac{\pi}{2j+2} \right) - 1 \right)$$
    if $j$ is even.
\end{lemma}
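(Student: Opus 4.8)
\emph{Proof proposal.} The plan is to reduce the lemma to an exact trigonometric sum and then evaluate that sum in closed form. Since the text has already established that $W_{l,j}(x,k)\equiv E_j(x,k-1)$ for $0\le j\le l$, the polynomial $W_{l,j}(x,k)$ has exactly the $j$ simple roots $2\sqrt{k-1}\cos\!\left(\frac{h}{j+1}\pi\right)$ for $1\le h\le j$. Hence
\[
\Psi(W_{l,j}(x,k)) \;=\; 2\sqrt{k-1}\,\sum_{h=1}^{j}\left\lvert\cos\!\left(\frac{h\pi}{j+1}\right)\right\rvert \;=:\; 2\sqrt{k-1}\,S_j ,
\]
so the whole task is to compute $S_j$. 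The degenerate cases $j=0$ (constant polynomial) and $j=1$ (root $0$) give $\Psi=0$, consistent with the claimed formulas since $\csc(\pi/2)=1$ and $\cot(\pi/4)=1$, so from now on one may take $j\ge 2$.

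First I would use the symmetry $\cos\!\left(\frac{(j+1-h)\pi}{j+1}\right)=-\cos\!\left(\frac{h\pi}{j+1}\right)$ together with the fact that $\cos(h\pi/(j+1))>0$ for $h<(j+1)/2$ and $<0$ for $h>(j+1)/2$ to fold the sum onto its positive half. If $j$ is even then $j+1$ is odd, no cosine vanishes, and $S_j=2\sum_{h=1}^{j/2}\cos\!\left(\frac{h\pi}{j+1}\right)$; if $j$ is odd then $j+1$ is even, the middle term $h=(j+1)/2$ is zero, and $S_j=2\sum_{h=1}^{(j-1)/2}\cos\!\left(\frac{h\pi}{j+1}\right)$. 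In both cases I then invoke the Dirichlet-kernel identity $\sum_{h=1}^{m}\cos(h\theta)=\dfrac{\sin(m\theta/2)\cos((m+1)\theta/2)}{\sin(\theta/2)}$ with $\theta=\pi/(j+1)$.

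For $j$ even, taking $m=j/2$ gives $m\theta/2=\frac{\pi}{4}-\frac{\pi}{4(j+1)}$ and $(m+1)\theta/2=\frac{\pi}{4}+\frac{\pi}{4(j+1)}$; a product-to-sum expansion collapses the numerator to $\frac12\left(1-\sin\frac{\pi}{2(j+1)}\right)$, and dividing by $\sin\frac{\pi}{2(j+1)}$ and multiplying by $2$ yields $S_j=\csc\frac{\pi}{2j+2}-1$. For $j$ odd, taking $m=(j-1)/2$ gives $(m+1)\theta/2=\frac{\pi}{4}$ exactly and $m\theta/2=\frac{\pi}{4}-\frac{\pi}{2(j+1)}$; expanding $\sin\!\left(\frac{\pi}{4}-\frac{\pi}{2(j+1)}\right)\cos\frac{\pi}{4}$ the same way produces $\frac12\left(\cos\frac{\pi}{2(j+1)}-\sin\frac{\pi}{2(j+1)}\right)$, whence $S_j=\cot\frac{\pi}{2j+2}-1$. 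Multiplying through by $2\sqrt{k-1}$ delivers both asserted formulas.

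There is no real obstacle here — the argument is a finite exact computation. The only places that need care are: correctly accounting for the signs of the cosines so the absolute values are handled properly, keeping the parities of $j+1$ separated throughout, and verifying that the closed forms remain valid at the smallest values $j\in\{0,1,2\}$. A couple of sanity checks confirm the identities: $W_{l,2}=x^2-(k-1)$ gives $\Psi=2\sqrt{k-1}=2\sqrt{k-1}\bigl(\csc(\pi/6)-1\bigr)$, and $W_{l,3}=x\bigl(x^2-2(k-1)\bigr)$ gives $\Psi=2\sqrt{2}\sqrt{k-1}=2\sqrt{k-1}\bigl(\cot(\pi/8)-1\bigr)$, since $\cot(\pi/8)=1+\sqrt{2}$.
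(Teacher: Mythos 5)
Your proposal is correct and follows essentially the same route as the paper: both reduce $\Psi(W_{l,j}(x,k))$ via the sign symmetry of the cosines to $2\sqrt{k-1}\cdot 2\sum_{h=1}^{\lfloor j/2\rfloor}\cos\!\left(\frac{h\pi}{j+1}\right)$ and then evaluate that sum in closed form by parity of $j$; the paper does the geometric-progression summation with $\zeta=e^{i\pi/(j+1)}$ explicitly, whereas you quote the equivalent Dirichlet-kernel identity, which is only a presentational difference. Your parity case analysis, the small-$j$ checks, and the final simplifications to $\csc$ and $\cot$ all match the paper's conclusions.
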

\begin{proof}
    The equality is directly proven for $j = 0$. The polynomial $W_{l, 0}(x, k) = 1$ is of degree $0$ and has no roots, hence $\Psi(W_{l, 0}(x, k)) = 0$, while the according right-hand side $2 \sqrt{k-1} \left( \csc\left( \dfrac{\pi}{2} \right) - 1 \right)$ also obviously equals $0$.

    Suppose $1 \le j \le l$. We know that the roots of the polynomial $W_{l, j}(x, k)$ must be $2\sqrt{k-1}\cos\left(\frac{1}{j+1}\pi\right), 2\sqrt{k-1}\cos\left(\frac{2}{j+1}\pi\right), \ldots, 2\sqrt{k-1}\cos\left(\frac{j}{j+1}\pi\right)$. Hence
    \begin{align*}
        \Psi(W_{l, j}(x, k)) &= \sum_{h = 1}^{j}\left| 2\sqrt{k-1} \cos\left(\frac{h}{j+1}\pi\right) \right|\\
        &= 2\sqrt{k-1} \sum_{h = 1}^{j}\left|\cos\left(\frac{h}{j+1}\pi\right) \right| .
    \end{align*}
    Since $\cos\left(\frac{h}{j+1}\pi\right) > 0$ for $1 \le h < \frac{j+1}{2}$ and $\cos\left(\frac{h}{j+1}\pi\right) = -\cos\left(\frac{j+1-h}{j+1}\pi\right)$ for all $\frac{j+1}{2} < h \le j$, we can rewrite the last expression as
    $$\Psi(W_{l, j}(x, k)) = 4\sqrt{k-1} \sum_{h = 1}^{\lfloor\frac{j}{2}\rfloor}\cos\left(\frac{h}{j+1}\pi\right) .$$
    Let us denote $\zeta = e^{\frac{i \pi}{j+1}}$. It is convenient to replace $\cos\left(\frac{h}{j+1}\pi\right)$ with $\frac{\zeta^h + \zeta^{-h}}{2}$. This gives us:
    \begin{align*}
        \Psi(W_{l, j}(x, k)) &= 4\sqrt{k-1} \sum_{h = 1}^{\lfloor\frac{j}{2}\rfloor} \frac{\zeta^h + \zeta^{-h}}{2}\\
        &= 2\sqrt{k-1} \sum_{h = 1}^{\lfloor\frac{j}{2}\rfloor} \left(\zeta^h + \zeta^{-h}\right)\\
        &= 2\sqrt{k-1} \left( \left( \sum_{h = -\lfloor\frac{j}{2}\rfloor}^{\lfloor\frac{j}{2}\rfloor}\zeta^h \right) - 1 \right) .
    \end{align*}
    Since $\zeta \neq 1$, we can use the standard formula for summing a geometric progression in order to get
    \begin{align*}
        \Psi(W_{l, j}(x, k)) &= 2\sqrt{k-1} \left( \dfrac{\sum_{h = 0}^{2 \lfloor\frac{j}{2}\rfloor}\zeta^h}{\zeta^{\lfloor\frac{j}{2}\rfloor}} - 1 \right)\\
        &= 2\sqrt{k-1} \left( \dfrac{\zeta^{2 \lfloor\frac{j}{2}\rfloor + 1} - 1}{\zeta^{\lfloor\frac{j}{2}\rfloor}(\zeta-1)} - 1 \right)\\
        &= 2\sqrt{k-1} \left( \dfrac{\zeta^{\lfloor\frac{j}{2}\rfloor + 1} - \zeta^{-\lfloor\frac{j}{2}\rfloor}}{\zeta-1} - 1 \right)\\
        &= 2\sqrt{k-1} \left( \dfrac{\left(\zeta^{\lfloor\frac{j}{2}\rfloor + 1} - \zeta^{-\lfloor\frac{j}{2}\rfloor} \right)\left(\frac{1}{\zeta}-1\right)}{(\zeta-1)\left(\frac{1}{\zeta}-1\right)} -1 \right) .
    \end{align*}
    By taking into consideration that
    \begin{align*}
        \left(\zeta^{\lfloor\frac{j}{2}\rfloor + 1} - \zeta^{-\lfloor\frac{j}{2}\rfloor} \right)\left(\frac{1}{\zeta}-1\right) &= \zeta^{\lfloor\frac{j}{2}\rfloor}+\zeta^{-\lfloor\frac{j}{2}\rfloor} - \zeta^{\lfloor\frac{j}{2}\rfloor+1} - \zeta^{-\lfloor\frac{j}{2}\rfloor-1}\\
        &= 2\cos\left( \frac{\lfloor\frac{j}{2}\rfloor}{j+1}\pi\right)-2\cos\left( \frac{\lfloor\frac{j}{2}\rfloor+1}{j+1}\pi\right)
    \end{align*}
    and $(\zeta-1)\left(\frac{1}{\zeta}-1\right) = 2 - \left(\zeta + \frac{1}{\zeta}\right) = 2 - 2 \cos\left(\frac{1}{j+1}\pi\right)$, we conclude that
    \begin{align*}
        \Psi(W_{l, j}(x, k)) &= 2\sqrt{k-1} \left( \frac{2\cos\left( \frac{\lfloor\frac{j}{2}\rfloor}{j+1}\pi\right)-2\cos\left( \frac{\lfloor\frac{j}{2}\rfloor+1}{j+1}\pi\right)}{2 - 2 \cos\left(\frac{1}{j+1}\pi\right)}-1 \right)\\
        &= 2\sqrt{k-1} \left( \frac{\cos\left( \frac{\lfloor\frac{j}{2}\rfloor}{j+1}\pi\right)-\cos\left( \frac{\lfloor\frac{j}{2}\rfloor+1}{j+1}\pi\right)}{1 - \cos\left(\frac{1}{j+1}\pi\right)}-1 \right) .
    \end{align*}
    If $j$ is odd, then $\lfloor\frac{j}{2}\rfloor = \frac{j-1}{2}$ and $\frac{\lfloor\frac{j}{2}\rfloor+1}{j+1}\pi=\frac{\pi}{2}$, which transforms the given expression into
    \begin{align*}
        \Psi(W_{l, j}(x, k)) &= 2\sqrt{k-1} \left( \frac{\cos\left( \frac{j-1}{j+1}\cdot\frac{\pi}{2}\right)}{1 - \cos\left(\frac{1}{j+1}\pi\right)}-1 \right)\\
        &= 2\sqrt{k-1} \left( \frac{\sin\left( \frac{2}{j+1}\cdot\frac{\pi}{2}\right)}{2 \sin^2\left(\frac{\pi}{2j+2}\right)}-1 \right)\\
        &= 2\sqrt{k-1} \left( \frac{\sin\left( \frac{\pi}{j+1}\right)}{2 \sin^2\left(\frac{\pi}{2j+2}\right)}-1 \right)\\
        &= 2\sqrt{k-1} \left( \frac{2\sin\left( \frac{\pi}{2j+2}\right)\cos\left( \frac{\pi}{2j+2}\right)}{2 \sin^2\left(\frac{\pi}{2j+2}\right)}-1 \right)\\
        &= 2\sqrt{k-1} \left( \cot\left( \frac{\pi}{2j+2} \right) - 1 \right) .
    \end{align*}
    If $j$ is even, then $\lfloor\frac{j}{2}\rfloor = \frac{j}{2}$ as well as $\cos\left( \frac{\lfloor\frac{j}{2}\rfloor+1}{j+1}\pi\right) = - \cos\left( \frac{\lfloor\frac{j}{2}\rfloor}{j+1}\pi\right)$, which gives
    \begin{align*}
        \Psi(W_{l, j}(x, k)) &= 2\sqrt{k-1} \left( \frac{2\cos\left( \frac{j}{j+1} \cdot \frac{\pi}{2}\right)}{1 - \cos\left(\frac{1}{j+1}\pi\right)}-1 \right)\\
        &= 2\sqrt{k-1} \left( \frac{2\sin\left( \frac{1}{j+1} \cdot \frac{\pi}{2}\right)}{2 \sin^2\left(\frac{\pi}{2j+2}\right)}-1 \right)\\
        &= 2\sqrt{k-1} \left( \csc\left( \frac{\pi}{2j+2} \right)-1 \right) .
    \end{align*}
\end{proof}

\begin{lemma}\label{last_wj_approx}
    Suppose $k \ge 3$. For an even $l \ge 2$, the value $\Psi(W_{l, l+1}(x, k))$ can be approximated via
    $$2 \sqrt{k-1} \left( \cot\left( \frac{\pi}{2l+4}\right) - 2.2 \right) < \Psi(W_{l, l+1}(x, k)) < 2 \sqrt{k-1} \ \cot \left( \frac{\pi}{2l+4}\right) ,$$
    while for an odd $l \ge 1$, the following approximation can be made:
    $$2 \sqrt{k-1} \left( \csc\left( \frac{\pi}{2l+4}\right) - 2.2 \right) < \Psi(W_{l, l+1}(x, k)) < 2 \sqrt{k-1} \ \csc\left( \frac{\pi}{2l+4}\right) .$$
\end{lemma}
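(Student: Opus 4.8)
The plan is to feed the interlacing estimates of Lemma~\ref{geronimus_approx} into an exact evaluation of the cosine sums they generate, and then tidy up with a handful of elementary trigonometric inequalities. Write $m = \lfloor (l+1)/2 \rfloor$ and let $\alpha_1 > \alpha_2 > \cdots > \alpha_m > 0$ be the positive roots of $W_{l, l+1}(x, k)$ provided by Lemma~\ref{geronimus_approx}. Since the remaining roots are exactly $-\alpha_1, \ldots, -\alpha_m$, together with a simple root at $0$ precisely when $l+1$ is odd, we have $\Psi(W_{l, l+1}(x, k)) = 2 \sum_{j=1}^{m} \alpha_j$. Summing the bounds $2\sqrt{k-1}\cos\left(\frac{j-0.5}{l+2}\pi\right) > \alpha_j > 2\sqrt{k-1}\cos\left(\frac{j+0.5}{l+2}\pi\right)$ over $1 \le j \le m$ then yields
\begin{equation*}
4\sqrt{k-1} \sum_{j=1}^{m} \cos\left( \frac{(2j+1)\pi}{2(l+2)} \right) < \Psi(W_{l, l+1}(x, k)) < 4\sqrt{k-1} \sum_{j=1}^{m} \cos\left( \frac{(2j-1)\pi}{2(l+2)} \right).
\end{equation*}

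I would then compute the two cosine sums in closed form, reusing the geometric-progression manipulation from the proof of Lemma~\ref{all_wj_exact}: for every $\phi$ with $\sin\phi \neq 0$ one has $\sum_{j=1}^{m} \cos((2j-1)\phi) = \frac{\sin(2m\phi)}{2\sin\phi}$, whence also $\sum_{j=1}^{m} \cos((2j+1)\phi) = \frac{\sin(2(m+1)\phi)}{2\sin\phi} - \cos\phi$. Taking $\phi = \frac{\pi}{2(l+2)}$ and splitting on the parity of $l$, one finds that $2m\phi$ and $2(m+1)\phi$ equal $\frac{\pi}{2} - \phi$ and $\frac{\pi}{2} + \phi$ when $l$ is odd, and $\frac{\pi}{2} - 2\phi$ and $\frac{\pi}{2}$ when $l$ is even, so the sums collapse through $\sin(\frac{\pi}{2}\pm x) = \cos x$ and $\cos 2\phi = 1 - 2\sin^2\phi$. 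Explicitly, for odd $l$ the upper sum equals $\frac{1}{2}\cot\left(\frac{\pi}{2l+4}\right)$ and the lower sum equals $\frac{1}{2}\cot\left(\frac{\pi}{2l+4}\right) - \cos\left(\frac{\pi}{2l+4}\right)$, whereas for even $l$ the upper sum equals $\frac{1}{2}\csc\left(\frac{\pi}{2l+4}\right) - \sin\left(\frac{\pi}{2l+4}\right)$ and the lower sum equals $\frac{1}{2}\csc\left(\frac{\pi}{2l+4}\right) - \cos\left(\frac{\pi}{2l+4}\right)$. Multiplying through by $4\sqrt{k-1}$ turns the displayed double inequality into an explicit two-sided bound for $\Psi(W_{l, l+1}(x, k))$.

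It remains to derive the stated inequalities from these explicit bounds using only elementary estimates in the single quantity $\theta := \frac{\pi}{2l+4}$, which ranges over $(0, \pi/6]$ for odd $l \ge 1$ and over $(0, \pi/8]$ for even $l \ge 2$. The upper bounds are immediate: for odd $l$ one invokes $\cot\theta < \csc\theta$, and for even $l$ the needed inequality $\csc\theta - 2\sin\theta < \cot\theta$ is equivalent to $\cos 2\theta < \cos\theta$, which holds since $\theta < \pi/3$. For the lower bounds, using $\csc\theta - \cot\theta = \tan(\theta/2)$, the even case reduces to $2\cos\theta - \tan(\theta/2) < 2.2$, which is trivial because the left-hand side is smaller than $2$, while the odd case reduces to $2\cos\theta + \tan(\theta/2) < 2.2$. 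This last inequality is the one genuinely delicate point, since the crude bound $2 + \tan(\pi/12) \approx 2.27$ overshoots the constant $2.2$: the remedy is to write $2\cos\theta + \tan(\theta/2) = 2 - 4\sin^2(\theta/2) + \tan(\theta/2)$ and to bound $\tan(\theta/2) \le \sin(\theta/2)/\cos(\pi/12)$ on the relevant range, which keeps the correction term $\tan(\theta/2) - 4\sin^2(\theta/2)$ strictly below $0.2$. Everything else is a routine parity split together with the trigonometric simplifications above, so I expect the main effort to lie in that final estimate and in the careful bookkeeping, rather than in any conceptual difficulty.
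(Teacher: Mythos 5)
Your proposal is correct and follows essentially the same route as the paper: feed the root bounds of Lemma~\ref{geronimus_approx} into the two cosine sums, evaluate them in closed form via the geometric-progression identity, and finish with elementary trigonometric estimates (your closed forms $\tfrac12\cot\theta$, $\tfrac12\cot\theta-\cos\theta$, $\tfrac12\csc\theta-\sin\theta$, $\tfrac12\csc\theta-\cos\theta$ agree with the paper's after simplification). The only real difference is at the delicate inequality $\tan(\theta/2)+2\cos\theta<2.2$, where the paper splits off $l=1$ (where the left side equals exactly $2$) and uses the crude bound $\tan(\pi/16)<0.2$ for $l\ge 3$, whereas your uniform estimate $\tan(\theta/2)-4\sin^2(\theta/2)<0.2$ via the parabola bound handles all odd $l\ge 1$ at once; both work.
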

\begin{proof}
    Lemma \ref{geronimus_approx} directly gives us
    $$4 \sqrt{k-1} \sum_{h=1}^{\lfloor \frac{l+1}{2} \rfloor} \cos \left( \dfrac{h+0.5}{l+2} \pi \right) < \Psi(W_{l, l+1}(x, k)) < 4 \sqrt{k-1} \sum_{h=1}^{\lfloor \frac{l+1}{2} \rfloor} \cos \left( \dfrac{h-0.5}{l+2} \pi \right) .$$
    We will denote $\zeta = e^{\frac{i \pi}{2l+4}}$. From
    \begin{align*}
        \cos \left( \dfrac{h-0.5}{l+2} \pi \right) &= \dfrac{\zeta^{2h-1}+\zeta^{-(2h-1)}}{2} \, ,\\
        \cos \left( \dfrac{h+0.5}{l+2} \pi \right) &= \dfrac{\zeta^{2h+1}+\zeta^{-(2h+1)}}{2} \, ,
    \end{align*}
    we obtain
    \begin{align*}
        \Psi(W_{l, l+1}(x, k)) &> 2 \sqrt{k-1} \sum_{h=1}^{\lfloor \frac{l+1}{2} \rfloor} \left( \zeta^{2h+1} + \zeta^{-(2h+1)} \right) ,\\
        \Psi(W_{l, l+1}(x, k)) &< 2 \sqrt{k-1} \sum_{h=1}^{\lfloor \frac{l+1}{2} \rfloor} \left( \zeta^{2h-1} + \zeta^{-(2h-1)} \right) .
    \end{align*}
    Since $\zeta \neq 1$, we can apply the standard formula for summing a geometric progression in order to get
    \begin{align*}
        \Psi(W_{l, l+1}(x, k)) &< 2 \sqrt{k-1} \ \zeta^{1-2\lfloor \frac{l+1}{2} \rfloor} \left( 1 + \zeta^2 + \zeta^4 + \cdots + \zeta^{4\lfloor \frac{l+1}{2} \rfloor-2} \right)\\
        &= 2 \sqrt{k-1} \ \zeta^{1-2\lfloor \frac{l+1}{2} \rfloor} \ \dfrac{1 - \zeta^{4\lfloor \frac{l+1}{2} \rfloor}}{1 - \zeta^2}\\
        &= 2 \sqrt{k-1} \ \dfrac{\zeta^{1-2\lfloor \frac{l+1}{2} \rfloor} - \zeta^{2\lfloor \frac{l+1}{2} \rfloor+1}}{1 - \zeta^2}\\
        &= 2 \sqrt{k-1} \ \dfrac{(\zeta^{1-2\lfloor \frac{l+1}{2} \rfloor} - \zeta^{2\lfloor \frac{l+1}{2} \rfloor+1})(1 - \zeta^{-2})}{(1 - \zeta^2)(1 - \zeta^{-2})}\\
        &= 2 \sqrt{k-1} \ \dfrac{(\zeta^{2\lfloor \frac{l+1}{2} \rfloor-1} + \zeta^{1-2\lfloor \frac{l+1}{2} \rfloor}) - (\zeta^{2\lfloor \frac{l+1}{2} \rfloor+1} + \zeta^{-2\lfloor \frac{l+1}{2} \rfloor-1})}{2 - (\zeta^2 + \zeta^{-2})}\\
        &= 2 \sqrt{k-1} \ \dfrac{2 \cos \left( \dfrac{2\lfloor \frac{l+1}{2} \rfloor-1}{2l+4}\pi\right) - 2 \cos \left( \dfrac{2\lfloor \frac{l+1}{2} \rfloor+1}{2l+4}\pi\right)}{2 - 2\cos \left( \dfrac{1}{l+2}\pi\right)}\\
        &= 2 \sqrt{k-1} \ \dfrac{\cos \left( \dfrac{2\lfloor \frac{l+1}{2} \rfloor-1}{2l+4}\pi\right) - \cos \left( \dfrac{2\lfloor \frac{l+1}{2} \rfloor+1}{2l+4}\pi\right)}{1 - \cos \left( \dfrac{1}{l+2}\pi\right)} \, ,
    \end{align*}
    as well as
    \begin{align*}
        \Psi&(W_{l, l+1}(x, k)) >\\
        &> 2 \sqrt{k-1} \left( \zeta^{-1-2\lfloor \frac{l+1}{2} \rfloor} \left( 1 + \zeta^2 + \zeta^4 + \cdots + \zeta^{4\lfloor \frac{l+1}{2} \rfloor+2} \right) - (\zeta + \zeta^{-1}) \right)\\
        &= 2 \sqrt{k-1} \left( \zeta^{-1-2\lfloor \frac{l+1}{2} \rfloor} \ \dfrac{1 - \zeta^{4\lfloor \frac{l+1}{2} \rfloor+4}}{1 - \zeta^2} - (\zeta + \zeta^{-1}) \right)\\
        &= 2 \sqrt{k-1} \left( \dfrac{\zeta^{-1-2\lfloor \frac{l+1}{2} \rfloor} - \zeta^{2\lfloor \frac{l+1}{2} \rfloor+3}}{1-\zeta^2} - (\zeta+\zeta^{-1}) \right)\\
        &= 2 \sqrt{k-1} \left( \dfrac{(\zeta^{-1-2\lfloor \frac{l+1}{2} \rfloor} - \zeta^{2\lfloor \frac{l+1}{2} \rfloor+3})(1-\zeta^{-2})}{(1-\zeta^2)(1-\zeta^{-2})} - (\zeta+\zeta^{-1}) \right)\\
        &= 2 \sqrt{k-1} \left( \dfrac{(\zeta^{2\lfloor \frac{l+1}{2} \rfloor+1}+\zeta^{-2\lfloor \frac{l+1}{2} \rfloor-1}) - (\zeta^{2\lfloor \frac{l+1}{2} \rfloor+3} + \zeta^{-2\lfloor \frac{l+1}{2} \rfloor-3})}{2 - (\zeta^2 + \zeta^{-2})} - (\zeta+\zeta^{-1}) \right)\\
        &= 2 \sqrt{k-1} \left( \dfrac{2\cos \left( \dfrac{2\lfloor \frac{l+1}{2} \rfloor+1}{2l+4}\pi\right) - 2\cos \left( \dfrac{2\lfloor \frac{l+1}{2} \rfloor+3}{2l+4}\pi\right)}{2 - 2\cos \left( \dfrac{1}{l+2}\pi\right)} - 2\cos\left( \dfrac{\pi}{2l+4}\right) \right)\\
        &> 2 \sqrt{k-1} \left( \dfrac{\cos \left( \dfrac{2\lfloor \frac{l+1}{2} \rfloor+1}{2l+4}\pi\right) - \cos \left( \dfrac{2\lfloor \frac{l+1}{2} \rfloor+3}{2l+4}\pi\right)}{1 - \cos \left( \dfrac{1}{l+2}\pi\right)} - 2\cos\left( \dfrac{\pi}{2l+4}\right) \right) \, .
    \end{align*}
    
    \emph{Case }$2 \mid l$:\quad We have $\lfloor \frac{l+1}{2} \rfloor = \frac{l}{2}$, which allows us to conclude
    \begin{align*}
        \Psi(W_{l, l+1}(x, k)) &< 2 \sqrt{k-1} \ \dfrac{\cos \left( \dfrac{l-1}{2l+4}\pi\right) - \cos \left( \dfrac{l+1}{2l+4}\pi\right)}{1 - \cos \left( \dfrac{1}{l+2}\pi\right)}\\
        &= 2 \sqrt{k-1} \ \dfrac{\cos \left( \dfrac{l-1}{l+2} \cdot \dfrac{\pi}{2}\right) - \cos \left( \dfrac{l+1}{l+2} \cdot \dfrac{\pi}{2}\right)}{1 - \cos \left( \dfrac{1}{l+2}\pi\right)}\\
        &= 2 \sqrt{k-1} \ \dfrac{\sin \left( \dfrac{3}{l+2} \cdot \dfrac{\pi}{2}\right) - \sin \left( \dfrac{1}{l+2} \cdot \dfrac{\pi}{2}\right)}{2 \sin^2 \left( \dfrac{\pi}{2l+4}\right)}\\
        &= 2 \sqrt{k-1} \ \dfrac{2 \sin \left( \dfrac{\pi}{2l+4} \right) \cos \left( \dfrac{2\pi}{2l+4} \right)}{2 \sin^2 \left( \dfrac{\pi}{2l+4}\right)}\\
        &= 2 \sqrt{k-1} \ \dfrac{\cos \left( \dfrac{2\pi}{2l+4} \right)}{\sin \left( \dfrac{\pi}{2l+4}\right)}\\
        &< 2 \sqrt{k-1} \ \dfrac{\cos \left( \dfrac{\pi}{2l+4} \right)}{\sin \left( \dfrac{\pi}{2l+4}\right)}\\
        &= 2 \sqrt{k-1} \ \cot \left( \dfrac{\pi}{2l+4} \right)
    \end{align*}
    along with
    \begin{align*}
        \Psi&(W_{l, l+1}(x, k)) >\\
        &> 2 \sqrt{k-1} \left( \dfrac{\cos \left( \dfrac{l+1}{2l+4}\pi\right) - \cos \left( \dfrac{2l+3}{2l+4}\pi\right)}{1 - \cos \left( \dfrac{1}{l+2}\pi\right)} - 2 \cos\left( \dfrac{1}{2l+4}\pi\right) \right)\\
        &= 2 \sqrt{k-1} \left( \dfrac{2\cos \left( \dfrac{l+1}{2l+4}\pi\right)}{2 \sin^2 \left( \dfrac{\pi}{2l+4}\right)} - 2 \cos\left( \dfrac{1}{2l+4}\pi\right) \right)\\
        &= 2 \sqrt{k-1} \left( \dfrac{\sin \left( \dfrac{\pi}{2l+4}\right)}{\sin^2 \left( \dfrac{\pi}{2l+4}\right)} - 2 \cos\left( \dfrac{1}{2l+4}\pi\right) \right)\\
        &> 2 \sqrt{k-1} \left( \csc\left( \dfrac{\pi}{2l+4} \right) - 2.2 \right) .
    \end{align*}
    
    \emph{Case }$2 \nmid l$:\quad We have $\lfloor \frac{l+1}{2} \rfloor = \frac{l+1}{2}$, thus obtaining
    \begin{align*}
        \Psi(W_{l, l+1}(x, k)) &< 2 \sqrt{k-1} \ \dfrac{\cos \left( \dfrac{l}{2l+4}\pi\right) - \cos \left( \dfrac{l+2}{2l+4}\pi\right)}{1 - \cos \left( \dfrac{1}{l+2}\pi\right)}\\
        &= 2 \sqrt{k-1} \ \dfrac{\cos \left( \dfrac{l}{2l+4}\pi\right)}{2 \sin^2 \left( \dfrac{\pi}{2l+4}\right)}\\
        &= 2 \sqrt{k-1} \ \dfrac{\sin \left( \dfrac{2}{2l+4}\pi\right)}{2 \sin^2 \left( \dfrac{\pi}{2l+4}\right)}\\
        &= 2 \sqrt{k-1} \ \dfrac{2 \sin \left( \dfrac{\pi}{2l+4}\right) \cos \left( \dfrac{\pi}{2l+4}\right) }{2 \sin^2 \left( \dfrac{\pi}{2l+4}\right)}\\
        &= 2 \sqrt{k-1} \ \cot \left( \dfrac{\pi}{2l+4} \right)\\
        &< 2 \sqrt{k-1} \ \csc \left( \dfrac{\pi}{2l+4} \right)
    \end{align*}
    as well as
    \begin{align*}
        \Psi&(W_{l, l+1}(x, k)) >\\
        &> 2 \sqrt{k-1} \left( \dfrac{\cos \left( \dfrac{l+2}{2l+4}\pi\right) - \cos \left( \dfrac{l+4}{2l+4}\pi\right)}{1 - \cos \left( \dfrac{1}{l+2}\pi\right)} - 2 \cos\left( \dfrac{1}{2l+4}\pi\right) \right)\\
        &= 2 \sqrt{k-1} \left( \dfrac{ - \cos \left( \dfrac{l+4}{2l+4}\pi\right)}{2 \sin^2 \left( \dfrac{\pi}{2l+4}\right)} - 2 \cos\left( \dfrac{1}{2l+4}\pi\right) \right)\\
        &= 2 \sqrt{k-1} \left( \dfrac{ \sin \left( \dfrac{2}{2l+4}\pi\right)}{2 \sin^2 \left( \dfrac{\pi}{2l+4}\right)} - 2 \cos\left( \dfrac{1}{2l+4}\pi\right) \right)\\
        &= 2 \sqrt{k-1} \left( \dfrac{ 2 \sin \left( \dfrac{\pi}{2l+4}\right) \cos \left( \dfrac{\pi}{2l+4}\right)}{2 \sin^2 \left( \dfrac{\pi}{2l+4}\right)} - 2 \cos\left( \dfrac{1}{2l+4}\pi\right) \right)\\
        &= 2 \sqrt{k-1} \left( \dfrac{\cos \left( \dfrac{\pi}{2l+4}\right)}{\sin \left( \dfrac{\pi}{2l+4}\right)} - 2 \cos\left( \dfrac{1}{2l+4}\pi\right) \right)\\
        &= 2 \sqrt{k-1} \left( \dfrac{1}{\sin \left( \dfrac{\pi}{2l+4}\right)} - \dfrac{1 - \cos \left( \dfrac{\pi}{2l+4}\right)}{\sin \left( \dfrac{\pi}{2l+4}\right)} - 2 \cos\left( \dfrac{1}{2l+4}\pi\right) \right)\\
        &= 2 \sqrt{k-1} \left( \csc\left(\dfrac{\pi}{2l+4}\right) - \dfrac{2 \sin^2 \left( \dfrac{\pi}{4l+8}\right)}{2 \sin \left( \dfrac{\pi}{4l+8}\right)\cos \left( \dfrac{\pi}{4l+8}\right)} - 2 \cos\left( \dfrac{1}{2l+4}\pi\right) \right)\\
        &= 2 \sqrt{k-1} \left( \csc\left(\dfrac{\pi}{2l+4}\right) - \tan\left(\dfrac{\pi}{4l+8}\right) - 2 \cos\left( \dfrac{\pi}{2l+4}\right) \right) .
    \end{align*}
    To complete the proof, it is sufficient to show that
    $$\tan\left(\dfrac{\pi}{4l+8}\right) + 2 \cos\left( \dfrac{\pi}{2l+4}\right) \le 2.2$$
    for all $l \ge 1$. For $l = 1$, we have
    \begin{align*}
        \tan\left( \dfrac{\pi}{12} \right) + 2 \cos\left(\dfrac{\pi}{6}\right) &= (2 - \sqrt{3}) + 2 \cdot \dfrac{\sqrt{3}}{2} = 2 < 2.2 \, .
    \end{align*}
    For $l \ge 2$, we get $\tan\left( \dfrac{\pi}{4l+8} \right) \le \tan\left( \dfrac{\pi}{16} \right) = \sqrt{4 + 2 \sqrt{2}} - \sqrt{2} - 1 < 0.2$. Together with $2 \cos\left(\dfrac{\pi}{2l+4}\right) \le 2$, this gives the desired result.
\end{proof}

With the direct help of Lemma \ref{all_wj_exact} and Lemma \ref{last_wj_approx}, we are able to formulate the following proof.

\bigskip\noindent
{\em Proof of Theorem \ref{energy_approx}}.\quad
First of all, it is easy to prove Eq.\ (\ref{simple_l2}) by taking into consideration Theorem \ref{spectra}. It immediately follows that the energy of $d(l, 2)$ must be equal to the sum
$$E(d(l, 2)) = 2 \sum_{h=1}^{l} 2 \cos \left( \dfrac{h}{2l+2}\pi \right)$$
Thankfully, Lemma \ref{all_wj_exact} has already shown us how this exact sum can be computed in an elegant way. In the remainder of the proof we will suppose that $k \ge 3$ and focus on proving Eq.\ (\ref{complicated_l3_1}) and Eq.\ (\ref{complicated_l3_2}).

From Eq.\ (\ref{dendrimer-formula}), we easily obtain
\begin{align*}
    E(d(l, k)) = \Psi(W_{l, l+1}(x, k)) &+ (k-1)\Psi(W_{l, l}(x, k))\\
    &+ \sum_{j=1}^{l-1}k(k-2)(k-1)^{l-1-j}\Psi(W_{l, j}(x, k)) \, .
\end{align*}
The expression for $E(d(l, k))$ can be rewritten in the following way:
\newpage
\begin{align*}
    E(d(l, k)) = \Psi&(W_{l, l+1}(x, k)) + (k-1)\Psi(W_{l, l}(x, k))\\
    &+ \sum_{j=1}^{l-1}((k-1)^2 - 1) (k-1)^{l-1-j}\Psi(W_{l, j}(x, k))\\
    = \Psi&(W_{l, l+1}(x, k)) + (k-1)\Psi(W_{l, l}(x, k))\\
    &+ \sum_{j=1}^{l-1}(k-1)^{l+1-j}\Psi(W_{l, j}(x, k)) - \sum_{j=1}^{l-1}(k-1)^{l-1-j}\Psi(W_{l, j} (x, k))\\
    = \Psi&(W_{l, l+1}(x, k)) + (k-1)\Psi(W_{l, k}(x, k))\\
    &+ \sum_{j=0}^{l-2}(k-1)^{l-j}\Psi(W_{l, j+1}(x, k)) - \sum_{j=2}^{l}(k-1)^{l-j}\Psi(W_{l, j-1}(x, k)) \, ,
\end{align*}
thus giving
\begin{align*}
    E(d(l, k)) = \sum_{j=0}^{l}(k-1)^{l-j}\Psi(W_{l, j+1}(x, k)) - \sum_{j=2}^{l}(k-1)^{l-j}\Psi(W_{l, j-1}(x, k)) \, .
\end{align*}
We know that $\Psi(W_{l, 1}(x, k)) = 0$ since $W_{l, 1}(x, k) = x$, which means that
\begin{align*}
    E(d(l, k)) = \sum_{j=1}^{l}(k-1)^{l-j}\Psi(W_{l, j+1}(x, k)) &+ (k-1)^l \Psi(W_{l, 1}(x, k))\\
    &- \sum_{j=2}^{l}(k-1)^{l-j}\Psi(W_{l, j-1}(x, k))\\
    = \sum_{j=1}^{l}(k-1)^{l-j}\Psi(W_{l, j+1}(x, k)) &- \sum_{j=2}^{l}(k-1)^{l-j}\Psi(W_{l, j-1}(x, k)) \, .
\end{align*}
Also, $\Psi(W_{l, 0}(x, k)) = 0$, which implies
\begin{align*}
    E(d(l, k)) = \sum_{j=1}^{l}(k-1)^{l-j}\Psi(W_{l, j+1}(x, k)) &- \sum_{j=2}^{l}(k-1)^{l-j}\Psi(W_{l, j-1}(x, k))\\
    &- (k-1)^{l-1} \Psi(W_{l, 0}(x, k))\\
    = \sum_{j=1}^{l}(k-1)^{l-j}\Psi(W_{l, j+1}(x, k)) &- \sum_{j=1}^{l}(k-1)^{l-j}\Psi(W_{l, j-1}(x, k)) \, ,\\
\end{align*}
from which we get
\begin{align*}
    E(d(l, k)) &= \sum_{j=1}^{l}(k-1)^{l-j}[\Psi(W_{l, j+1}(x, k)) - \Psi(W_{l, j-1}(x, k))] \, .
\end{align*}
The implementation of Lemma \ref{all_wj_exact} gives
\begin{align*}
    \Psi&(W_{l, j+1}(x, k)) - \Psi(W_{l, j-1}(x, k)) = \\
    &= \begin{cases}
        2 \sqrt{k-1} \left( \cot\left( \dfrac{\pi}{2j+4}\right) - 1\right) - 2 \sqrt{k-1} \left( \cot\left( \dfrac{\pi}{2j}\right) - 1 \right), & 2 \mid j\\[12pt]
        2 \sqrt{k-1} \left( \csc\left( \dfrac{\pi}{2j+4}\right) - 1\right) - 2 \sqrt{k-1} \left( \csc\left( \dfrac{\pi}{2j}\right) - 1 \right), & 2 \nmid j
    \end{cases}\\
    &= \begin{cases}
        (k-1)^{1/2} \left( 2 \cot\left( \dfrac{\pi}{2j+4}\right) - 2  \cot\left( \dfrac{\pi}{2j}\right) \right), & 2 \mid j\\[12pt]
        (k-1)^{1/2} \left( 2 \csc\left( \dfrac{\pi}{2j+4}\right) - 2  \csc\left( \dfrac{\pi}{2j}\right) \right), & 2 \nmid j
    \end{cases}\\
    &=  f_{j-1} (k-1)^{1/2} \, ,
\end{align*}
for each $1 \le j \le l-1$, which further implies
\begin{align*}
    E(d(l, k)) &= \sum_{j=1}^{l-1} f_{j-1} (k-1)^{l+1/2-j} + [\Psi(W_{l, l+1}(x, k)) - \Psi(W_{l, l-1}(x, k))]\\
    &= \sum_{j=0}^{l-2} f_j (k-1)^{l-1/2-j} + [\Psi(W_{l, l+1}(x, k)) - \Psi(W_{l, l-1}(x, k))] \, .
\end{align*}
Lemma \ref{last_wj_approx} helps us to approximate the final term $\Psi(W_{l, l+1}(x, k)) - \Psi(W_{l, l-1}(x, k))$. If $l$ is odd, then
\begin{align*}
    \Psi&(W_{l, l+1}(x, k)) - \Psi(W_{l, l-1}(x, k)) <\\
    &< 2 \sqrt{k-1} \ \csc\left( \frac{\pi}{2l+4}\right) - 2 \sqrt{k-1} \left( \csc\left( \dfrac{\pi}{2l} \right) - 1 \right)\\
    &= 2 \sqrt{k-1} \left( \csc\left( \frac{\pi}{2l+4}\right) - \csc\left( \dfrac{\pi}{2l} \right) \right) + 2 \sqrt{k-1}\\
    &= f_{l-1} (k-1)^{1/2} + 2 (k-1)^{1/2}
\end{align*}
and
\begin{align*}
    \Psi&(W_{l, l+1}(x, k)) - \Psi(W_{l, l-1}(x, k)) >\\
    &> 2 \sqrt{k-1} \left( \csc\left( \frac{\pi}{2l+4}\right) - 2.2 \right) - 2 \sqrt{k-1} \left( \csc\left( \dfrac{\pi}{2l} \right) - 1 \right)\\
    &= 2 \sqrt{k-1} \left( \csc\left( \frac{\pi}{2l+4}\right) - \csc\left( \dfrac{\pi}{2l} \right) \right) - 2.4 \sqrt{k-1}\\
    &= f_{l-1} (k-1)^{1/2} - 2.4 (k-1)^{1/2} \, .
\end{align*}
If $l$ is even, then we similarly obtain
\begin{align*}
    \Psi&(W_{l, l+1}(x, k)) - \Psi(W_{l, l-1}(x, k)) <\\
    &< 2 \sqrt{k-1} \ \cot\left( \frac{\pi}{2l+4}\right) - 2 \sqrt{k-1} \left( \cot\left( \dfrac{\pi}{2l} \right) - 1 \right)\\
    &= 2 \sqrt{k-1} \left( \cot\left( \frac{\pi}{2l+4}\right) - \cot\left( \dfrac{\pi}{2l} \right) \right) + 2 \sqrt{k-1}\\
    &= f_{l-1} (k-1)^{1/2} + 2 (k-1)^{1/2}
\end{align*}
and
\begin{align*}
    \Psi&(W_{l, l+1}(x, k)) - \Psi(W_{l, l-1}(x, k)) >\\
    &> 2 \sqrt{k-1} \left( \cot\left( \frac{\pi}{2l+4}\right) - 2.2 \right) - 2 \sqrt{k-1} \left( \cot\left( \dfrac{\pi}{2l} \right) - 1 \right)\\
    &= 2 \sqrt{k-1} \left( \cot\left( \frac{\pi}{2l+4}\right) - \cot\left( \dfrac{\pi}{2l} \right) \right) - 2.4 \sqrt{k-1}\\
    &= f_{l-1} (k-1)^{1/2} - 2.4 (k-1)^{1/2} \, .
\end{align*}
Taking everything into consideration, we get the approximations
\begin{align*}
    E(d(l, k)) &< \sum_{j=0}^{l-2} f_j (k-1)^{l-1/2-j} + f_{l-1} (k-1)^{1/2} + 2 (k-1)^{1/2}\\
    &= \sum_{j=0}^{l-1} f_j (k-1)^{l-1/2-j} + 2 (k-1)^{1/2}
\end{align*}
and
\begin{align*}
    E(d(l, k)) &> \sum_{j=0}^{l-2} f_j (k-1)^{l-1/2-j} + f_{l-1} (k-1)^{1/2} - 2.4 (k-1)^{1/2}\\
    &= \sum_{j=0}^{l-1} f_j (k-1)^{l-1/2-j} - 2.4 (k-1)^{1/2}
\end{align*}
which complete the proof. \qed

Theorem \ref{energy_approx} provides a way to approximate the energy of a dendrimer $d(l, k)$ via two expressions that resemble polynomials. To be more precise, these expressions represent a linear combination of the power terms $(k-1)^{h-1/2}, \ h \in \mathbb{N}$, where the corresponding coefficients depend solely on $l$, not $k$. This makes it easier to analyze the asymptotic properties of $E(d(l, k))$, leading us to the first of our two main theorems:
\begin{theoremx}\label{asymptotic_behavior}
    For any fixed value of $l \ge 1$, we have
    \begin{equation}\label{k_to_inf}
        E(d(l, k)) \sim 2 (k-1)^{l-1/2} \qquad \mbox{as $\ k \to \infty$} \, .
    \end{equation}
    Also, for any fixed value of $k \ge 3$, we have
    \begin{equation}\label{l_to_inf}
        E(d(l, k)) \sim \mu_k (k-1)^{l-1/2} \qquad \mbox{as $\ l \to \infty$} \, .
    \end{equation}
    where $\mu_k$ is the positive real number which represents the sum of the convergent positive series
    $$\sum_{j = 0}^{\infty} f_j (k-1)^{-j}$$
    defined by
    $$f_j = \begin{cases}
        2 \csc\left( \dfrac{\pi}{2j+6} \right) - 2\csc\left( \dfrac{\pi}{2j+2} \right), & 2 \mid j \, ,\\
        2 \cot\left( \dfrac{\pi}{2j+6} \right) - 2\cot\left( \dfrac{\pi}{2j+2} \right), & 2 \nmid j \, .
    \end{cases}$$
\end{theoremx}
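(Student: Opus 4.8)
\medskip\noindent\textbf{Proof plan for Theorem~\ref{asymptotic_behavior}.}\quad
The whole argument is a ``divide and squeeze'' built on the two-sided estimate of Theorem~\ref{energy_approx}. For $k \ge 3$ and $l \ge 1$ I would set
$$S_l = \sum_{j=0}^{l-1} f_j\,(k-1)^{-j} \, ,$$
and divide Eqs.~\eqref{complicated_l3_1} and~\eqref{complicated_l3_2} through by $(k-1)^{l-1/2}$, obtaining
$$S_l - 2.4\,(k-1)^{1-l} \;<\; \frac{E(d(l,k))}{(k-1)^{l-1/2}} \;<\; S_l + 2\,(k-1)^{1-l} \, .$$
Everything then reduces to controlling $S_l$ and the vanishing quantities $(k-1)^{1-l}$ in the two regimes.

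Next I would record two elementary facts about the sequence $(f_j)_{j \ge 0}$. Since $\csc$ is strictly decreasing on $(0,\pi/2]$, $\cot$ is strictly decreasing on $(0,\pi)$, and $0 < \frac{\pi}{2j+6} < \frac{\pi}{2j+2} \le \frac{\pi}{2}$ for every $j \ge 0$, both terms defining $f_j$ (the $\csc$ pair or the $\cot$ pair) are positive with the first exceeding the second, so $f_j > 0$. For an upper bound, Jordan's inequality $\sin t \ge \frac{2t}{\pi}$ on $(0,\pi/2]$ gives $\csc t \le \frac{\pi}{2t}$ there, and $\cot t \le \csc t$ in general, so $0 < f_j < 2\csc\!\left(\frac{\pi}{2j+6}\right) \le 2j+6$. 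Consequently, for fixed $k \ge 3$ (so that $k-1 \ge 2$) the partial sums $S_l$ are strictly increasing in $l$ and bounded above by $\sum_{j \ge 0}(2j+6)(k-1)^{-j} < \infty$, hence convergent; this is precisely the claim that $\mu_k = \sum_{j \ge 0} f_j\,(k-1)^{-j}$ is a well-defined positive real, with $\mu_k \ge f_0 = 2\csc(\pi/6) - 2\csc(\pi/2) = 2$. (A finer expansion $\csc t = t^{-1}+O(t)$, $\cot t = t^{-1}+O(t)$ as $t\to 0^+$ even shows $f_j \to \frac{8}{\pi}$, but this is not needed here.)

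For the regime $k \to \infty$ with $l$ fixed I would distinguish $l \ge 2$ from $l = 1$. If $l \ge 2$, then as $k \to \infty$ the finite sum $S_l$ tends to $f_0 = 2$ (its $j \ge 1$ terms vanish) while $(k-1)^{1-l} \to 0$ because $1-l \le -1$; the displayed inequalities then squeeze $E(d(l,k))/(k-1)^{l-1/2}$ to $2$, which is Eq.~\eqref{k_to_inf}. The case $l=1$ is not covered by that squeeze, since there the error terms have the same order as $S_1 = 2$; instead I would invoke Eq.~\eqref{l_is_1}, by which $d(1,k)$ has eigenvalues $\pm\sqrt{k}$ together with $0$ of multiplicity $k-1$, so $E(d(1,k)) = 2\sqrt{k} \sim 2\sqrt{k-1} = 2(k-1)^{1-1/2}$.

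Finally, for the regime $l \to \infty$ with $k \ge 3$ fixed, $(k-1)^{1-l} \to 0$ because $k-1 \ge 2$, and $S_l \to \mu_k$ by the convergence proved above, so the same pair of inequalities squeezes $E(d(l,k))/(k-1)^{l-1/2}$ to $\mu_k$, yielding Eq.~\eqref{l_to_inf}. The only genuinely non-mechanical step in this plan is the analysis of $(f_j)$ in the second paragraph --- establishing positivity and a crude growth bound so that the series defining $\mu_k$ converges --- and I expect that (mild) step to be the main point; everything else is an immediate consequence of squeezing Theorem~\ref{energy_approx}.
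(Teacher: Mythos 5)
Your proposal is correct and follows the same overall route as the paper: divide the two-sided bound of Theorem~\ref{energy_approx} by $(k-1)^{l-1/2}$ and squeeze in each regime, treating $l=1$ separately via $E(d(1,k))=2\sqrt{k}$ exactly as the paper does. The one place where you genuinely diverge is the proof that $\sum_{j\ge 0} f_j(k-1)^{-j}$ converges. The paper deduces this from Lemma~\ref{cool_sequences}, which shows that the even-indexed values $(a_j)$ increase and the odd-indexed values $(b_j)$ decrease to the common limit $8/\pi$, whence $\sup_j f_j = f_1$ and the series is dominated by a geometric one. You instead establish positivity by monotonicity of $\csc$ and $\cot$ and then use Jordan's inequality to get the crude linear bound $0 < f_j < 2j+6$, comparing with $\sum_j (2j+6)(k-1)^{-j}$, which converges because $k-1\ge 2$. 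Your version is more elementary and self-contained for the purposes of this theorem; the paper's sharper uniform bound $\sup_j f_j = f_1$ and the underlying monotonicity are not gratuitous, though --- they are reused in the proof of Theorem~\ref{best_approx} to control the tail $\sum_{j\ge 3} f_j(k-1)^{-j}$ by $f_3$, so the paper gets that machinery essentially for free here. Both arguments are valid.
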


In order to make the proof of Theorem \ref{asymptotic_behavior} easier to follow, we are going to need the following auxiliary lemma:
\begin{lemma}\label{cool_sequences}
    If $(a_j)_{j \in \mathbb{N}_0}$ and $(b_j)_{j \in \mathbb{N}_0}$ are real sequences such that
    \begin{align*}
        a_j &= 2 \csc \left( \dfrac{\pi}{2j+6} \right) - 2 \csc \left( \dfrac{\pi}{2j+2} \right) ,\\
        b_j &= 2 \cot \left( \dfrac{\pi}{2j+6} \right) - 2 \cot \left( \dfrac{\pi}{2j+2} \right) ,
    \end{align*}
    for each $j \in \mathbb{N}_0$, then both of these sequences converge to $\dfrac{8}{\pi}$. Also, $(a_j)_{j \in \mathbb{N}_0}$ is strictly increasing, while $(b_j)_{j \in \mathbb{N}_0}$ is strictly decreasing.
\end{lemma}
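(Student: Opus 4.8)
The plan is to view both sequences as discrete second differences of a single smooth function and to extract everything from a convexity/concavity statement about that function. Set $\phi(y)=2\csc(\pi/y)$ and $\chi(y)=2\cot(\pi/y)$, which are smooth on $[2,\infty)$ because there $\pi/y\in(0,\pi/2]$ stays away from the poles of $\csc$ and $\cot$. Then $a_j=\phi(2j+6)-\phi(2j+2)$ and $b_j=\chi(2j+6)-\chi(2j+2)$ for all $j\in\mathbb{N}_0$.

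For the limits, I would substitute the expansions $\csc x=\tfrac1x+\tfrac x6+O(x^3)$ and $\cot x=\tfrac1x-\tfrac x3+O(x^3)$ (valid as $x\to0^+$) into $\phi$ and $\chi$ at $x=\pi/(2j+6)$ and $x=\pi/(2j+2)$. The leading $1/x$ terms contribute $\tfrac2\pi\bigl((2j+6)-(2j+2)\bigr)=\tfrac8\pi$ to each of $a_j$ and $b_j$, while every remaining term is $O(j^{-1})$; hence $a_j,b_j\to 8/\pi$. This step is routine.

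The monotonicity is where the real work lies, and it reduces to two elementary trigonometric inequalities. A direct computation gives, with $u:=\pi/y\in(0,\pi/2]$,
\[
\phi''(y)=\frac{2\pi}{y^{3}}\,\csc u\,\Bigl(u\bigl(2\csc^{2}u-1\bigr)-2\cot u\Bigr),\qquad
\chi''(y)=\frac{4\pi}{y^{3}}\,\csc^{2}u\,\bigl(u\cot u-1\bigr).
\]
Multiplying the bracket in $\phi''$ through by $\sin^{2}u>0$ turns the inequality $\phi''(y)>0$ into $p(u):=2u-\sin 2u-u\sin^{2}u>0$ on $(0,\pi/2]$; since $p(0)=0$ and $p'(u)=\sin u\,(3\sin u-2u\cos u)$, which is positive because $\tan u>u$ forces $3\tan u>2u$ on $(0,\pi/2)$ (with $p'(\pi/2)=3>0$ checked directly), the function $p$ is strictly positive there. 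Hence $\phi$ is strictly convex on $[2,\infty)$. For $\chi$, the sign condition $u\cot u-1<0$ is literally $u<\tan u$, so $\chi$ is strictly concave on $[2,\infty)$. I expect that assembling these estimates, rather than any single deep idea, will be the main obstacle.

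To conclude, write $a_{j+1}-a_j=\bigl[\phi(2j+8)-\phi(2j+6)\bigr]-\bigl[\phi(2j+4)-\phi(2j+2)\bigr]=\int_{2j+2}^{2j+4}\bigl(\phi'(t+4)-\phi'(t)\bigr)\,dt$, which is strictly positive since $\phi'$ is strictly increasing; hence $(a_j)_{j\in\mathbb{N}_0}$ is strictly increasing. Symmetrically, $b_{j+1}-b_j=\int_{2j+2}^{2j+4}\bigl(\chi'(t+4)-\chi'(t)\bigr)\,dt<0$ because $\chi'$ is strictly decreasing, so $(b_j)_{j\in\mathbb{N}_0}$ is strictly decreasing. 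Together with the limits computed above, this proves the lemma.
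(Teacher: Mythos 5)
Your proposal is correct and follows essentially the same route as the paper: both arguments hinge on the strict convexity of $\csc(\pi/y)$ and strict concavity of $\cot(\pi/y)$ on $[2,\infty)$, established via the sign of the second derivative, and your reduction of $\phi''>0$ to $2u-\sin 2u-u\sin^2 u>0$ is exactly the paper's inequality $\cos^2\theta+1>\tfrac{2\sin\theta\cos\theta}{\theta}$ multiplied through by $\theta$, while the concavity condition is the same $u<\tan u$ in both. The only cosmetic differences are that you compute the limits by Laurent expansion rather than by product-to-sum identities with $\sin x/x\to 1$, and you pass from convexity to monotonicity of the second differences via an integral representation rather than the mean value theorem.
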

\begin{proof}
    We have
    \begin{align*}
        a_j &= 2 \csc \left( \frac{\pi}{2j+6} \right) - 2 \csc \left( \frac{\pi}{2j+2} \right)\\
        &= 2 \ \frac{ \sin \left( \frac{\pi}{2j+2} \right) - \sin \left( \frac{\pi}{2j+6} \right)}{\sin \left( \frac{\pi}{2j+2} \right) \sin \left( \frac{\pi}{2j+6} \right)}\\
        &= 2 \ \frac{ 2 \sin \left( \frac{\frac{\pi}{2j+2} - \frac{\pi}{2j+6}}{2}\right) \cos \left( \frac{\frac{\pi}{2j+2} + \frac{\pi}{2j+6}}{2}\right)}{\sin \left( \frac{\pi}{2j+2} \right) \sin \left( \frac{\pi}{2j+6} \right)}\\
        &= 4 \ \frac{ \sin \left( \frac{\pi}{2(j+1)(j+3)} \right) \cos \left( \frac{(j+2)\pi}{2(j+1)(j+3)} \right)}{\sin \left( \frac{\pi}{2j+2} \right) \sin \left( \frac{\pi}{2j+6} \right)}\\
        &= 4 \ \frac{2}{\pi} \cdot \frac{\frac{\pi}{2j+2}}{\sin \left( \frac{\pi}{2j+2} \right)} \cdot \frac{\frac{\pi}{2j+6}}{\sin \left( \frac{\pi}{2j+6} \right)} \cdot 
        \frac{\sin \left( \frac{\pi}{2(j+1)(j+3)} \right)}{\frac{\pi}{2(j+1)(j+3)}} \cdot \cos \left( \frac{(j+2)\pi}{2(j+1)(j+3)} \right)\\
        &= \frac{8}{\pi} \cdot \frac{\frac{\pi}{2j+2}}{\sin \left( \frac{\pi}{2j+2} \right)} \cdot \frac{\frac{\pi}{2j+6}}{\sin \left( \frac{\pi}{2j+6} \right)} \cdot 
        \frac{\sin \left( \frac{\pi}{2(j+1)(j+3)} \right)}{\frac{\pi}{2(j+1)(j+3)}} \cdot \cos \left( \frac{(j+2)\pi}{2(j+1)(j+3)} \right) .
    \end{align*}
    By using the well known properties of limits together with the famous formula $\lim\limits_{x \to 0}\dfrac{\sin x}{x} = 1$, it becomes obvious that $\lim\limits_{j \to \infty}a_j = \dfrac{8}{\pi}$. Similarly, we obtain
    \begin{align*}
        b_j &= 2 \cot \left( \frac{\pi}{2j+6} \right) - 2 \cot \left( \frac{\pi}{2j+2} \right)\\
        &= 2 \ \frac{ \sin \left( \frac{\pi}{2j+2} \right) \cos \left( \frac{\pi}{2j+6} \right) - \sin \left( \frac{\pi}{2j+6} \right) \cos \left( \frac{\pi}{2j+2} \right)}{\sin \left( \frac{\pi}{2j+2} \right) \sin \left( \frac{\pi}{2j+6} \right)}\\
        &= 2 \ \frac{ \sin \left( \frac{\pi}{2j+2} - \frac{\pi}{2j+6} \right)}{\sin \left( \frac{\pi}{2j+2} \right) \sin \left( \frac{\pi}{2j+6} \right)}\\
        &= 2 \ \frac{ \sin \left( \frac{\pi}{(j+1)(j+3)} \right)}{\sin \left( \frac{\pi}{2j+2} \right) \sin \left( \frac{\pi}{2j+6} \right)}\\
        &= 2 \ \frac{4}{\pi} \cdot \frac{\frac{\pi}{2j+2}}{\sin \left( \frac{\pi}{2j+2} \right)} \cdot \frac{\frac{\pi}{2j+6}}{\sin \left( \frac{\pi}{2j+6} \right)} \cdot 
        \frac{\sin \left( \frac{\pi}{(j+1)(j+3)} \right)}{\frac{\pi}{(j+1)(j+3)}}\\
        &= \frac{8}{\pi} \cdot \frac{\frac{\pi}{2j+2}}{\sin \left( \frac{\pi}{2j+2} \right)} \cdot \frac{\frac{\pi}{2j+6}}{\sin \left( \frac{\pi}{2j+6} \right)} \cdot 
        \frac{\sin \left( \frac{\pi}{(j+1)(j+3)} \right)}{\frac{\pi}{(j+1)(j+3)}} \, ,
    \end{align*}
    which implies $\lim\limits_{j \to \infty}b_j = \dfrac{8}{\pi}$.
    
    Let us define the function $y_1(x) = \csc \left( \dfrac{\pi}{x} \right)$ on $[2, +\infty)$. This function is obviously twice differentiable everywhere. We know that
    \begin{align*}
        a_j - a_{j-1} &= 2 \csc \left( \dfrac{\pi}{2j+6} \right) - 2 \csc \left( \dfrac{\pi}{2j+2} \right) - 2 \csc \left( \dfrac{\pi}{2j+4} \right) + 2 \csc \left( \dfrac{\pi}{2j} \right)\\
        &= 2(y_1(2j+6) - y_1(2j+4)) - 2(y_1(2j+2)- y_1(2j))
    \end{align*}
    for each $j \ge 1$. Since the function $y_1$ is differentiable everywhere, we can use the mean value theorem to get
    \begin{align*}
        a_j - a_{j-1} &= 2((2j+6)-(2j+4))y_1^{\prime}(\xi_1) - 2((2j+2)-2j)y_1^{\prime}(\xi_2)\\
        &= 4 y_1^{\prime}(\xi_1) - 4 y_1^{\prime}(\xi_2)
    \end{align*}
    for some $\xi_1 \in (2j+4, 2j+6),\ \xi_2 \in (2j, 2j+2)$. Since $\xi_1 > \xi_2 > 2$, in order to prove that $(a_j)_{j \in \mathbb{N}_0}$ is strictly increasing, it is sufficient to show that $y_1^{\prime\prime}(x)$ is positive on $(2, +\infty)$. We compute
    \begin{align*}
        y_1^{\prime}(x) &= -\dfrac{\cos\left( \dfrac{\pi}{x} \right)}{\sin^2 \left( \dfrac{\pi}{x}\right)} \cdot \left( - \dfrac{\pi}{x^2} \right) = \dfrac{\cos\left( \dfrac{\pi}{x} \right)}{\sin^2 \left( \dfrac{\pi}{x}\right)} \cdot \dfrac{\pi}{x^2} = \dfrac{\pi}{x^2} \csc \left( \dfrac{\pi}{x}\right) \cot \left( \dfrac{\pi}{x}\right) ,
    \end{align*}
    which further gives
    \begin{align*}
        y_1^{\prime\prime}(x) &= \left( \dfrac{\pi}{x^2} \right)^{\prime} \csc \left( \dfrac{\pi}{x}\right) \cot \left( \dfrac{\pi}{x}\right) + \dfrac{\pi}{x^2} \csc^{\prime} \left( \dfrac{\pi}{x}\right) \cot \left( \dfrac{\pi}{x}\right) + \dfrac{\pi}{x^2} \csc \left( \dfrac{\pi}{x}\right) \cot^{\prime} \left( \dfrac{\pi}{x}\right)\\
        &= - \dfrac{2\pi}{x^3} \csc \left( \dfrac{\pi}{x}\right) \cot \left( \dfrac{\pi}{x}\right) + \dfrac{\pi}{x^2} \left( \dfrac{\pi}{x^2} \csc \left( \dfrac{\pi}{x}\right) \cot \left( \dfrac{\pi}{x}\right) \right) \cot \left( \dfrac{\pi}{x}\right)\\
        & \qquad\qquad + \dfrac{\pi}{x^2} \csc \left( \dfrac{\pi}{x}\right) \left( -\csc^2 \left( \dfrac{\pi}{x} \right) \right) \cdot \left( - \dfrac{\pi}{x^2} \right)\\
        &= - \dfrac{2\pi}{x^3} \csc \left( \dfrac{\pi}{x}\right) \cot \left( \dfrac{\pi}{x}\right) + \dfrac{\pi^2}{x^4} \left( \csc \left( \dfrac{\pi}{x} \right) \cot^2 \left( \dfrac{\pi}{x} \right) + \csc^3 \left( \dfrac{\pi}{x} \right) \right)\\
        &= \dfrac{\pi}{x^4} \csc \left( \dfrac{\pi}{x} \right) \left( -2x \cot \left( \dfrac{\pi}{x} \right) + \pi \cot^2 \left( \dfrac{\pi}{x} \right) + \pi \csc^2 \left( \dfrac{\pi}{x} \right)\right) .
    \end{align*}
    Here, it is possible to conclude that for $x \in (2, +\infty)$
    \begin{align*}
        y_1^{\prime\prime}(x) > 0 &\iff -2x \cot \left( \dfrac{\pi}{x} \right) + \pi \cot^2 \left( \dfrac{\pi}{x} \right) + \pi \csc^2 \left( \dfrac{\pi}{x} \right) > 0\\
        &\iff \cot^2 \left( \dfrac{\pi}{x} \right) + \csc^2 \left( \dfrac{\pi}{x} \right) > \dfrac{2x}{\pi} \cot \left( \dfrac{\pi}{x} \right)\\
        &\iff \cos^2 \left( \dfrac{\pi}{x} \right) + 1 > \dfrac{2x}{\pi} \sin \left( \dfrac{\pi}{x} \right) \cos \left( \dfrac{\pi}{x} \right)\\
        &\iff \cos^2 \left( \dfrac{\pi}{x} \right) + 1 > \dfrac{2 \sin \left( \dfrac{\pi}{x} \right) \cos \left( \dfrac{\pi}{x} \right) }{\dfrac{\pi}{x}} .
    \end{align*}
    Hence, in order to prove that $y_1^{\prime\prime}(x) > 0$ for all $x \in (2, +\infty)$, it is sufficient to show that $\cos^2 (\theta) + 1 > \dfrac{2\sin\theta\cos\theta}{\theta}$ for each $\theta \in (0, \pi/2)$. However, we know that
    \begin{alignat*}{2}
        && \cos^2 \theta + 1 &> \dfrac{2\sin\theta\cos\theta}{\theta}\\
        \iff && \cos^2 \theta - 2\cos\theta + 1 &> \dfrac{2\sin\theta\cos\theta}{\theta} - 2\cos\theta\\
        \iff && (\cos\theta-1)^2 &> 2\cos\theta \cdot \left( \dfrac{\sin\theta}{\theta} - 1 \right) ,
    \end{alignat*}
    which obviously must hold for all $\theta \in (0, \pi/2)$, since $(\cos\theta-1)^2 > 0$, while $\dfrac{\sin\theta}{\theta} - 1 < 0$ due to $0 < \sin\theta < \theta$, which is known to hold on $(0, \pi/2)$.
    
    Now we define the function $y_2(x) = \cot\left( \dfrac{\pi}{x} \right)$ on $[2, +\infty)$. This function is also twice differentiable everywhere. We have
    \begin{align*}
        b_j - b_{j-1} &= 2 \cot \left( \dfrac{\pi}{2j+6} \right) - 2 \cot \left( \dfrac{\pi}{2j+2} \right) - 2 \cot \left( \dfrac{\pi}{2j+4} \right) + 2 \cot \left( \dfrac{\pi}{2j} \right)\\
        &= 2(y_2(2j+6) - y_2(2j+4)) - 2(y_2(2j+2)- y_2(2j))
    \end{align*}
    for each $j \ge 1$. Due to the differentiability of $y_2$ on its entire domain, we can implement the mean value theorem to obtain
    \begin{align*}
        b_j - b_{j-1} &= 2((2j+6)-(2j+4))y_2^{\prime}(\eta_1) - 2((2j+2)-2j)y_2^{\prime}(\eta_2)\\
        &= 4 y_2^{\prime}(\eta_1) - 4 y_2^{\prime}(\eta_2)
    \end{align*}
    for some $\eta_1 \in (2j+4, 2j+6),\ \eta_2 \in (2j, 2j+2)$. Because of $\eta_1 > \eta_2 > 2$, in order to prove that $(b_j)_{j \in \mathbb{N}_0}$ is strictly decreasing, it is sufficient to show that $y_2^{\prime\prime}(x)$ is negative on $(2, +\infty)$. We compute
    \begin{align*}
        y_2^{\prime}(x) &= -\csc^2\left( \dfrac{\pi}{x} \right) \cdot \left( - \dfrac{\pi}{x^2} \right) = \dfrac{\pi}{x^2} \csc^2\left( \dfrac{\pi}{x} \right)
    \end{align*}
    along with
    \begin{align*}
        y_2^{\prime\prime}(x) &= \left( \dfrac{\pi}{x^2} \right)^{\prime} \csc^2 \left( \dfrac{\pi}{x}\right) + 2 \ \dfrac{\pi}{x^2} \csc \left( \dfrac{\pi}{x}\right) \csc^{\prime} \left( \dfrac{\pi}{x}\right) \\
        &= - \dfrac{2\pi}{x^3} \csc^2 \left( \dfrac{\pi}{x}\right) + 2 \ \dfrac{\pi}{x^2} \csc \left( \dfrac{\pi}{x}\right) \cdot \left( \dfrac{\pi}{x^2} \csc \left( \dfrac{\pi}{x}\right) \cot \left( \dfrac{\pi}{x}\right) \right)\\
        &= \dfrac{\pi}{x^4} \csc^2 \left( \dfrac{\pi}{x}\right) \left( -2x + 2\pi \cot \left( \dfrac{\pi}{x}\right) \right) .
    \end{align*}
    Thus, for $x \in (2, +\infty)$ we have
    \begin{align*}
        y_2^{\prime\prime}(x) < 0 &\iff -2x + 2\pi \cot \left( \dfrac{\pi}{x}\right) < 0\\
        &\iff \dfrac{\pi}{x} \cot \left( \dfrac{\pi}{x}\right) < 1\\
        &\iff \dfrac{\pi}{x} < \tan \left( \dfrac{\pi}{x}\right)
    \end{align*}
    However, it is known that the inequality $\tan \theta > \theta$ holds on $(0, \pi/2)$, which proves that the sequence $(b_j)_{j \in \mathbb{N}_0}$ must be strictly decreasing.
\end{proof}

We are now in the position to prove Theorem \ref{asymptotic_behavior} by extensively relying on the sequences $(a_j)_{j \in \mathbb{N}_0}$ and $(b_j)_{j \in \mathbb{N}_0}$ defined in Lemma \ref{cool_sequences}, as well their properties which we have shown.

\bigskip\noindent
{\em Proof of Theorem \ref{asymptotic_behavior}}.\quad
First of all, we are going to prove Eq.\ (\ref{k_to_inf}). Directly from Theorem \ref{energy_approx}, we obtain:
\begin{equation}\label{quotient_approx_1}
    \dfrac{E(d(l, k))}{(k-1)^{l-1/2}} < \sum_{j = 0}^{l-1} f_j (k-1)^{-j} + 2(k-1)^{-(l-1)} \, ,
\end{equation}
as well as
\begin{equation}\label{quotient_approx_2}
    \dfrac{E(d(l, k))}{(k-1)^{l-1/2}} > \sum_{j = 0}^{l-1} f_j (k-1)^{-j} - 2.4(k-1)^{-(l-1)} \, .
\end{equation}
Provided $l \ge 2$, it is clear that when the $l$ variable is fixed and $k \to \infty$, then both the right-hand side in Eq.\ (\ref{quotient_approx_1}) and the right-hand side in Eq.\ (\ref{quotient_approx_2}) tend to $f_0$. By the squeeze theorem, we get that
\begin{align*}
    \lim_{k \to \infty} \dfrac{E(d(l, k))}{(k-1)^{l-1/2}} = f_0 \, .
\end{align*}
Taking into consideration that $f_0 = a_0 = 2$, Eq.\ (\ref{k_to_inf}) is proven for $l \ge 2$. For $l = 1$, the formula needs to be proven directly. From Eq.\ (\ref{l_is_1}) we see that $E(d(1, k)) = 2 \sqrt{k}$. Having this in mind, it is obvious that
\begin{align*}
    \lim_{k \to \infty} \dfrac{E(d(1, k))}{(k-1)^{1/2}} = 2 \, ,
\end{align*}
which completes the proof of Eq.\ (\ref{k_to_inf}).

The next step is to prove that the positive series
$$\sum_{j = 0}^{\infty} f_j (k-1)^{-j}$$
is convergent. From Lemma \ref{cool_sequences}, it is clear that all the $a_j$ must be smaller than $\dfrac{8}{\pi}$, while all the $b_j$ must be greater than $\dfrac{8}{\pi}$. Given the fact that $f_j = a_j$ when $j$ is even and $f_j = b_j$ when $j$ is odd, it is easy to establish that
$$ \sup_{j \in \mathbb{N}_0} f_j = f_1$$
In other words, if we denote $F_j = \sum\limits_{h=0}^{j} f_h (k-1)^{-h}$, then
$$ F_j \le \sum_{h=0}^{j} f_1 (k-1)^{-h}$$
for each $j \in \mathbb{N}_0$. Subsequently,
\begin{align*}
    F_j &\le f_1 \sum_{h=0}^{j} (k-1)^{-h}\\
    &= f_1 \ \dfrac{1 - (k-1)^{-j-1}}{1-(k-1)^{-1}}\\
    &< \dfrac{f_1}{1-(k-1)^{-1}}
\end{align*}
which means that the positive series $\sum\limits_{j = 0}^{\infty} f_j (k-1)^{-j}$ must have a sequence of partial sums which is bounded. This implies that the series is convergent.

Finally, we are going to prove Eq.\ (\ref{l_to_inf}). Suppose that the variable $k \ge 3$ is fixed. From Eq.\ (\ref{quotient_approx_1}) we get
\begin{align*}
    \limsup_{l \to \infty} \dfrac{E(d(l, k))}{(k-1)^{l-1/2}} \le \limsup_{l \to \infty} \left( \sum_{j = 0}^{l-1} f_j (k-1)^{-j} + 2(k-1)^{-(l-1)} \right) .
\end{align*}
Having proven the convergence of $\sum\limits_{j = 0}^{\infty} f_j (k-1)^{-j}$, we know that its sum is a positive real number $\mu_k$. From
\begin{align*}
    \lim\limits_{l \to \infty} \sum_{j = 0}^{l-1} f_j (k-1)^{-j} &= \mu_k \, ,\\
    \lim\limits_{l \to \infty} 2(k-1)^{-(l-1)} &= 0 \, ,
\end{align*}
we obtain
$$\limsup_{l \to \infty} \dfrac{E(d(l, k))}{(k-1)^{l-1/2}} \le \mu_k \, .$$
Similarly, from Eq.\ (\ref{quotient_approx_2}) we have
\begin{align*}
    \liminf_{l \to \infty} \dfrac{E(d(l, k))}{(k-1)^{l-1/2}} \ge \liminf_{l \to \infty} \left( \sum_{j = 0}^{l-1} f_j (k-1)^{-j} - 2.4(k-1)^{-(l-1)} \right) ,
\end{align*}
which ultimately gives
$$\liminf_{l \to \infty} \dfrac{E(d(l, k))}{(k-1)^{l-1/2}} \ge \mu_k \, .$$
From
$$ \mu_k \le \liminf_{l \to \infty} \dfrac{E(d(l, k))}{(k-1)^{l-1/2}} \le \limsup_{l \to \infty} \dfrac{E(d(l, k))}{(k-1)^{l-1/2}} \le \mu_k$$
we get $\lim\limits_{l \to \infty} \dfrac{E(d(l, k))}{(k-1)^{l-1/2}} = \mu_k$, which proves Eq.\ (\ref{l_to_inf}), as desired. \qed

We will end this paper by proving our second main result, along with one of its direct corollaries.
\begin{theoremx}\label{best_approx}
    For a given dendrimer $d(l, k)$, where $k \ge 3$ and $l \ge 2$, we have
    \begin{align}\label{final_theorem_1}
        E(d(l,k)) &< (k-1)^{l-1/2} \left( 2 + \dfrac{0.5 + \sqrt{2} + \sqrt{3} + \sqrt{5}}{k-1} \right) ,\\
        \label{final_theorem_2}E(d(l,k)) &> (k-1)^{l-1/2} \left( 2 + \dfrac{2 \sqrt{2}}{k-1} \right) .
    \end{align}
\end{theoremx}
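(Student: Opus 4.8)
The plan is to divide Eqs.~(\ref{complicated_l3_1}) and~(\ref{complicated_l3_2}) through by $(k-1)^{l-1/2}$ and write $t=\tfrac1{k-1}$, which lies in $(0,\tfrac12]$ since $k\ge 3$. After recording the first few coefficients explicitly — $f_0=2\csc\tfrac{\pi}{6}-2\csc\tfrac{\pi}{2}=2$, $f_1=2\cot\tfrac{\pi}{8}-2\cot\tfrac{\pi}{4}=2\sqrt2$, and $f_2=2\csc\tfrac{\pi}{10}-2\csc\tfrac{\pi}{6}=2\sqrt5-2$ (using $\csc\tfrac{\pi}{10}=\sqrt5+1$) — Theorem~\ref{energy_approx} takes the form
\[
2+2\sqrt2\,t+\sum_{j=2}^{l-1}f_jt^j-2.4\,t^{l-1}\;<\;\frac{E(d(l,k))}{(k-1)^{l-1/2}}\;<\;2+2\sqrt2\,t+\sum_{j=2}^{l-1}f_jt^j+2\,t^{l-1},
\]
so everything reduces to controlling the middle sum together with the small $t^{l-1}$ term. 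Throughout I will use that every $f_j>0$ and that $f_j\le f_1=2\sqrt2$ for all $j$ (the supremum established in the proof of Theorem~\ref{asymptotic_behavior} via Lemma~\ref{cool_sequences}), along with the elementary bounds $t^m\le t/2^{m-1}$ and $\tfrac1{1-t}\le 2$ valid for $t\le\tfrac12$.

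\medskip\noindent\emph{Lower bound.} For $l\ge 3$ it suffices to prove $\sum_{j=2}^{l-1}f_jt^j>2.4\,t^{l-1}$, and this is witnessed by a single term: since all $f_j>0$ we have $\sum_{j=2}^{l-1}f_jt^j\ge f_{l-1}t^{l-1}$, and $f_{l-1}>2.4$ because, by Lemma~\ref{cool_sequences}, $f_j\ge f_2=2\sqrt5-2>2.4$ at every even index $j\ge2$ (the sequence $(a_j)$ is increasing) while $f_j>\tfrac8\pi>2.4$ at every odd index (the sequence $(b_j)$ decreases to $\tfrac8\pi$). The case $l=2$ must be treated apart — here the middle sum is empty and the crude error $2.4\,t^{l-1}$ would dominate it — so I would instead invoke the exact spectrum coming from Eq.~(\ref{l_is_2}), namely $E(d(2,k))=2(k-1)^{3/2}+2\sqrt{2k-1}$, and deduce~(\ref{final_theorem_2}) for $l=2$ from $2k-1>2(k-1)$, which gives $2\sqrt{2k-1}>2\sqrt2\,(k-1)^{1/2}$.

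\medskip\noindent\emph{Upper bound.} Here I would peel off the $j=2$ term using $t^2\le\tfrac t2$, so $f_2t^2\le(\sqrt5-1)t$, and bound the remaining tail $\sum_{j=3}^{l-1}f_jt^j+2\,t^{l-1}$ by replacing each $f_j$ with $2\sqrt2$ and summing a geometric series. For $l\ge5$ this yields a tail at most $2\sqrt2\cdot\tfrac{t^3}{1-t}+2\,t^{l-1}\le 4\sqrt2\,t^3+2\,t^{l-1}\le\sqrt2\,t+\tfrac t4$ (using $t^3\le\tfrac t4$ and $t^{l-1}\le t^4\le\tfrac t8$), hence the whole expression after $2+2\sqrt2\,t$ is at most $(\sqrt5+\sqrt2-\tfrac34)t$, which is strictly below $(0.5-\sqrt2+\sqrt3+\sqrt5)t$ precisely because $2\sqrt2<1.25+\sqrt3$; adding back $2+2\sqrt2\,t$ gives~(\ref{final_theorem_1}). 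The remaining cases $l\in\{2,3,4\}$ are direct finite checks with the same estimates, each producing an even smaller constant (for instance $l=3$ gives the bound $2+2\sqrt2\,t+\sqrt5\,t$).

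\medskip\noindent The main obstacle I anticipate is the tightness of the upper-bound constant: the natural geometric estimate beats $0.5+\sqrt2+\sqrt3+\sqrt5$ by only about $\tfrac14$, so one must extract $f_0,f_1,f_2$ exactly and make sure every power and geometric-series estimate genuinely uses $t\le\tfrac12$; and, as noted, the lower bound at $l=2$ cannot be obtained from Theorem~\ref{energy_approx} and requires the explicit characteristic polynomial of $d(2,k)$.
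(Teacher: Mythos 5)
Your proposal is correct and follows essentially the same route as the paper: divide Theorem~\ref{energy_approx} by $(k-1)^{l-1/2}$, compute $f_0,f_1,f_2$ exactly, handle $l=2$ via the explicit polynomial from Eq.~(\ref{l_is_2}), and control the tail with a geometric series using the monotonicity facts from Lemma~\ref{cool_sequences}. The only (harmless) variations are that you witness the lower bound with the single term $f_{l-1}t^{l-1}>2.4\,t^{l-1}$ where the paper absorbs the $-2.4t^{l-1}$ error into the $j=2$ term via $2\sqrt5>4.4$, and you majorize the tail by $f_1=2\sqrt2$ (forcing a separate check at $l=4$) where the paper uses $\sup_{j\ge3}f_j=f_3$ to treat all $l\ge4$ uniformly and land exactly on the constant $0.5+\sqrt2+\sqrt3+\sqrt5$.
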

\begin{proof}
    For $l = 2$, we are going to prove that both Eq.\ (\ref{final_theorem_1}) and Eq.\ (\ref{final_theorem_2}) hold directly. From Eq.\ (\ref{l_is_2}), we conclude that $E(d(2, k)) = 2(k-1)^{3/2} + 2 \sqrt{2k-1}$, which gives us
    \begin{align*}
        \dfrac{E(d(2, k))}{(k-1)^{3/2}} &= 2 + 2 (k-1)^{-3/2} \sqrt{2k-1}\\
        &= 2 + \dfrac{2}{k-1} \sqrt{\dfrac{2k-1}{k-1}}\\
        &= 2 + \dfrac{2}{k-1} \sqrt{2 + \dfrac{1}{k-1}}
    \end{align*}
    We obviously have $\sqrt{2 + \dfrac{1}{k-1}} > \sqrt{2}$, which proves Eq.\ (\ref{final_theorem_2}). However, due to $k \ge 3$, we also have $\sqrt{2 + \dfrac{1}{k-1}} \le \sqrt{\dfrac{5}{2}} < \sqrt{5} < 0.5 + \sqrt{2} + \sqrt{3} + \sqrt{5}$, thereby proving Eq.\ (\ref{final_theorem_1}) as well.
    
    For $l = 3$, the expressions will again be proven directly. From Theorem \ref{energy_approx}, we get
    \begin{align*}
        E(d(3, k)) &< \sum_{j = 0}^{2} f_j (k-1)^{5/2-j} + 2(k-1)^{1/2} \, ,\\
        E(d(3, k)) &> \sum_{j = 0}^{2} f_j (k-1)^{5/2-j} - 2.4(k-1)^{1/2} \, ,
    \end{align*}
    where
    \begin{align*}
        f_0 &= 2\csc\left( \dfrac{\pi}{6} \right) - 2\csc\left( \dfrac{\pi}{2} \right) = 2 \, ,\\
        f_1 &= 2\cot\left( \dfrac{\pi}{8} \right) - 2\cot\left( \dfrac{\pi}{4} \right) = 2 \sqrt{2} \, ,\\
        f_2 &= 2\csc\left( \dfrac{\pi}{10} \right) - 2\csc\left( \dfrac{\pi}{6} \right) = 2 \sqrt{5} - 2 \, .
    \end{align*}
    Hence, we obtain
    \begin{align*}
        \dfrac{E(d(3, k))}{(k-1)^{5/2}} &< 2 + \dfrac{2 \sqrt{2}}{k-1} + \dfrac{2 \sqrt{5} - 2 + 2}{(k-1)^2} \, ,\\
        \dfrac{E(d(3, k))}{(k-1)^{5/2}} &> 2 + \dfrac{2 \sqrt{2}}{k-1} + \dfrac{2 \sqrt{5} - 2 - 2.4}{(k-1)^2} \, .
    \end{align*}
    We know that $2\sqrt{5} > 4.4 = 2 + 2.4$, which means that Eq.\ (\ref{final_theorem_2}) must hold for $l = 3$. On the other hand, we get
    \begin{align*}
        \dfrac{2 \sqrt{2}}{k-1} + \dfrac{2 \sqrt{5} - 2 + 2}{(k-1)^2} &= \dfrac{2 \sqrt{2}}{k-1} + \dfrac{2 \sqrt{5}}{(k-1)^2}\\
        &= \dfrac{2 \sqrt{2}}{k-1} + \dfrac{1}{k-1} \cdot \dfrac{2 \sqrt{5}}{k-1}\\
        &\le \dfrac{2 \sqrt{2}}{k-1} + \dfrac{1}{2} \cdot \dfrac{2 \sqrt{5}}{k-1}\\
        &= \dfrac{2 \sqrt{2} + \sqrt{5}}{k-1} \, ,
    \end{align*}
    which proves Eq. (\ref{final_theorem_1}), given the fact that $2\sqrt{2} + \sqrt{5} < 0.5 + \sqrt{2} + \sqrt{3} + \sqrt{5}$.
    
    Now, suppose that $l \ge 4$. By using Theorem \ref{energy_approx} once again, we directly conclude that
    \begin{align}\label{almost_done_1}
        \dfrac{E(d(l, k))}{(k-1)^{l-1/2}} &< \sum_{j = 0}^{l-1} f_j (k-1)^{-j} + 2(k-1)^{-(l-1)} \, ,\\
        \label{almost_done_2}\dfrac{E(d(l, k))}{(k-1)^{l-1/2}} &> \sum_{j = 0}^{l-1} f_j (k-1)^{-j} - 2.4(k-1)^{-(l-1)} \, .
    \end{align}
    From Eq.\ (\ref{almost_done_2}), we further obtain
    \begin{align*}
        \dfrac{E(d(l, k))}{(k-1)^{l-1/2}} &> \sum_{j = 0}^{2} f_j (k-1)^{-j} - 2.4(k-1)^{-(l-1)}\\
        &= 2 + \dfrac{2 \sqrt{2}}{k-1} + \dfrac{2 \sqrt{5} - 2}{(k-1)^2} - 2.4(k-1)^{-(l-1)}\\
        &> 2 + \dfrac{2 \sqrt{2}}{k-1} + \dfrac{2 \sqrt{5} - 2}{(k-1)^2} - 2.4(k-1)^{-2}\\
        &= 2 + \dfrac{2 \sqrt{2}}{k-1} + \dfrac{2 \sqrt{5} - 4.4}{(k-1)^2} \, ,
    \end{align*}
    which completes our entire proof of Eq.\ (\ref{final_theorem_2}), given the fact that $2 \sqrt{5} > 4.4$.
    
    From Eq.\ (\ref{almost_done_1}), we have
    \begin{align*}
        \dfrac{E(d(l, k))}{(k-1)^{l-1/2}} &< \sum_{j = 0}^{+\infty} f_j (k-1)^{-j} + 2(k-1)^{-(l-1)}\\
        &\le \sum_{j = 0}^{+\infty} f_j (k-1)^{-j} + 2(k-1)^{-3}\\
        &< 2 + \dfrac{2 \sqrt{2}}{k-1} + \dfrac{2 \sqrt{5} - 2}{(k-1)^2} + \dfrac{2}{(k-1)^3} + \sum_{j = 3}^{\infty} f_j (k-1)^{-j} \, .
    \end{align*}
    Here, it is important to notice that $\sup\limits_{j \ge 3}f_j = f_3$ due to the results obtained in Lemma \ref{cool_sequences}. This implies
    \begin{align*}
        \dfrac{E(d(l, k))}{(k-1)^{l-1/2}} &< 2 + \dfrac{2 \sqrt{2}}{k-1} + \dfrac{2 \sqrt{5} - 2}{(k-1)^2} + \dfrac{2}{(k-1)^3} + \sum_{j = 3}^{\infty} f_3 (k-1)^{-j}\\
        &= 2 + \dfrac{2 \sqrt{2}}{k-1} + \dfrac{2 \sqrt{5} - 2}{(k-1)^2} + \dfrac{2}{(k-1)^3} + \dfrac{f_3}{(k-1)^3} \sum_{j = 0}^{\infty} (k-1)^{-j}\\
        &= 2 + \dfrac{2 \sqrt{2}}{k-1} + \dfrac{2 \sqrt{5} - 2}{(k-1)^2} + \dfrac{2}{(k-1)^3} + \dfrac{f_3}{(k-1)^3} \cdot \dfrac{1}{1-\frac{1}{k-1}}\\
        &= 2 + \dfrac{2 \sqrt{2}}{k-1} + \dfrac{2 \sqrt{5} - 2}{(k-1)^2} + \dfrac{2}{(k-1)^3} + \dfrac{f_3}{(k-2)(k-1)^2} \, .
    \end{align*}
    It is clear that
    \begin{align*}
        \dfrac{1}{(k-1)^2} &\le \dfrac{1}{2} \cdot \dfrac{1}{k-1} \, ,\\
        \dfrac{1}{(k-1)^3} &\le \dfrac{1}{4} \cdot \dfrac{1}{k-1} \, ,\\
        \dfrac{1}{(k-2)(k-1)^2} &\le \dfrac{1}{2} \cdot \dfrac{1}{k-1} \, ,
    \end{align*}
    as well was
    \begin{align*}
        f_3 &= 2 \cot\left( \dfrac{\pi}{12} \right) - 2 \cot \left( \dfrac{\pi}{8} \right)\\
        &= 2 (2 + \sqrt{3}) - 2(\sqrt{2} + 1)\\
        &= 2 - 2 \sqrt{2} + 2 \sqrt{3} \, .
    \end{align*}
    We then obtain
    \begin{align*}
        \dfrac{E(d(l, k))}{(k-1)^{l-1/2}} &< 2 + \dfrac{2 \sqrt{2}}{k-1} + \dfrac{\sqrt{5} - 1}{k-1} + \dfrac{0.5}{k-1} + \dfrac{1 - \sqrt{2} + \sqrt{3}}{k-1}\\
        &= 2 + \dfrac{2\sqrt{2} + \sqrt{5} - 1 + 0.5 + 1 - \sqrt{2} + \sqrt{3}}{k-1}\\
        &= 2 + \dfrac{0.5 + \sqrt{2} + \sqrt{3} + \sqrt{5}}{k-1} \, ,
    \end{align*}
    thereby proving Eq.\ (\ref{final_theorem_1}) for $l \ge 4$.
\end{proof}
\begin{corollary}
    We have
    \begin{align*}
        E(d(l, k)) = \Theta((k-1)^{l-1/2}) \qquad \mbox{as $(l, k) \to \infty$ over $\mathbb{N} \times (\mathbb{N} \setminus \{1, 2\})$} \, .
    \end{align*}
\end{corollary}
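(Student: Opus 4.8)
The plan is to obtain the $\Theta$-estimate as an immediate consequence of Theorem~\ref{best_approx}, whose bounds are already uniform in $k$ once $k \ge 3$. In fact I would prove the slightly stronger statement that there are absolute constants $0 < c_1 \le c_2$ with
\begin{equation*}
    c_1 (k-1)^{l-1/2} \le E(d(l, k)) \le c_2 (k-1)^{l-1/2}
\end{equation*}
for every $l \ge 1$ and every $k \ge 3$; this clearly implies $E(d(l,k)) = \Theta((k-1)^{l-1/2})$ under any reasonable interpretation of $(l, k) \to \infty$ over $\mathbb{N} \times (\mathbb{N} \setminus \{1, 2\})$.

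First I would treat the single row $l = 1$, which is not covered by Theorem~\ref{best_approx}. By Eq.~(\ref{l_is_1}) we have $E(d(1, k)) = 2\sqrt{k}$, so $E(d(1, k)) / (k-1)^{1/2} = 2\sqrt{k/(k-1)} = 2\sqrt{1 + (k-1)^{-1}}$, which for $k \ge 3$ lies in the interval $[2, \sqrt{6}]$. Hence this row already satisfies two-sided bounds with constants inside $[2, \sqrt{6}]$.

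For $l \ge 2$, the lower bound in Theorem~\ref{best_approx} gives $E(d(l, k)) > 2(k-1)^{l-1/2}$, so one may take $c_1 = 2$. For the upper bound, note that $k \ge 3$ forces $k - 1 \ge 2$, so the factor $2 + \frac{0.5 + \sqrt{2} + \sqrt{3} + \sqrt{5}}{k-1}$ in Eq.~(\ref{final_theorem_1}) is bounded above by the constant $c_2 := 2 + \tfrac{1}{2}\bigl(0.5 + \sqrt{2} + \sqrt{3} + \sqrt{5}\bigr)$. Combining the two ranges (the case $l = 1$ and the case $l \ge 2$) yields $2(k-1)^{l-1/2} \le E(d(l,k)) \le c_2 (k-1)^{l-1/2}$ for all $l \ge 1$ and $k \ge 3$, which is exactly the desired $\Theta$-bound.

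The argument is essentially routine and I do not anticipate any genuine obstacle: the one subtlety is merely bookkeeping, namely that Theorem~\ref{best_approx} is stated only for $l \ge 2$, so the boundary case $l = 1$ must be covered separately via the explicit value $E(d(1,k)) = 2\sqrt{k}$; everything else follows from the fact that the $k$-dependent correction terms in Theorem~\ref{best_approx} are uniformly bounded once $k \ge 3$.
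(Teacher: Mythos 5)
Your proposal is correct and follows essentially the same route as the paper: handle $l=1$ separately via the explicit value $E(d(1,k))=2\sqrt{k}$, then apply Theorem~\ref{best_approx} for $l\ge 2$ and use $k-1\ge 2$ to make the bounding constants absolute. No issues.
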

\begin{proof}
    For $l = 1$ and $k \ge 3$, we have $E(d(2, k)) = 2 \sqrt{k}$, making it straightforward to conclude that
    \begin{align*}
        \dfrac{E(d(2, k))}{(k-1)^{1/2}} &\le 2 \sqrt{\dfrac{3}{2}} \, ,\\
        \dfrac{E(d(2, k))}{(k-1)^{1/2}} &> 2 \, ,
    \end{align*}
    i.e.\ $\dfrac{E(d(2, k))}{(k-1)^{1/2}} \in \left(2, \sqrt{6}\right]$. For $l \ge 2$ and $k \ge 3$ we can use Theorem \ref{best_approx} to obtain
    \begin{align*}
        \dfrac{E(d(l, k))}{(k-1)^{l-1/2}} &< 2 + \dfrac{0.5 + \sqrt{2} + \sqrt{3} + \sqrt{5}}{k-1} \, ,\\
        \dfrac{E(d(l, k))}{(k-1)^{l-1/2}} &> 2 + \dfrac{2 \sqrt{2}}{k-1} \, ,
    \end{align*}
    which further implies
    \begin{align*}
        \dfrac{E(d(l, k))}{(k-1)^{l-1/2}} &< 2 + \dfrac{0.5 + \sqrt{2} + \sqrt{3} + \sqrt{5}}{2} \, ,\\
        \dfrac{E(d(l, k))}{(k-1)^{l-1/2}} &> 2 \, .
    \end{align*}
    Since $2 + \dfrac{0.5+\sqrt{2}+\sqrt{3}+\sqrt{5}}{2} > \sqrt{6}$, we conclude that for all $l \ge 1$ and $k \ge 3$, the given inequality must hold:
    $$ 2 < \dfrac{E(d(l, k))}{(k-1)^{l-1/2}} < \dfrac{4.5+\sqrt{2}+\sqrt{3}+\sqrt{5}}{2} \, .$$
    The theorem statement follows directly.
\end{proof}

\section*{Conflict of interest}

The authors declare that they have no conflict of interest.


\begin{thebibliography}{}
\bibitem{gutman_energy}
Gutman I., The energy of a graph, Ber. Math.-Statist. Sekt. Forsch. Graz, 103, 1--22 (1978)
\bibitem{dendrimer_applications_1}
Nadjafi--Arani M.J., Khodashenas H., Ashrafi A.R., A New Method for Computing Wiener Index of Dendrimer Nanostars, MATCH Commun.\ Math.\ Comput.\ Chem., 69, 159--164 (2013)
\bibitem{dendrimer_applications_2}
Yang Y., Fan A-w., Wang H., Lv H., Zhang X-D., Multi-distance granularity structural $\alpha$-subtree index of generalized Bethe trees, Applied Mathematics and Computation, 359, 107--120 (2019)
\bibitem{rojo}
Rojo O., Soto R., The spectra of the adjacency matrix and Laplacian matrix for some balanced trees, Linear Algebra and its Applications, 403, 97--117 (2005)
\bibitem{stevanovic_energy}
Stevanovi\'c D., Approximate energy of dendrimers, MATCH Commun.\ Math.\ Comput.\ Chem., 64, 65--73 (2010)
\bibitem{bokhary_energy}
Bokhary S.A.u.H., Tabassum H., The energy of some tree dendrimers, J.\ Appl.\ Math.\ Comput., doi: 10.1007/s12190-021-01531-y
\bibitem{dickson_polynomials}
Lidl R., Mullen G.L., Turnwald G., Dickson polynomials, pp.\ 9--10. Longman Scientific \& Technical, Harlow; copublished in the United States with John Wiley \& Sons, Inc., New York (1993)
\bibitem{spectra_of_graphs}
Brouwer A.E., Haemers W.H., Spectra of Graphs, pp.\ 18--18. Springer-Verlag New York, New York (2012)
%\bibitem{cvetkovic}
%Cvetkovi\'c D., Doob M., Sachs H., Spectra of Graphs: Theory and Application, pp.\ 31--37. Academic Press, New York (1980)
\bibitem{geronimus}
Linial N., Magen A., Naor A., Girth and Euclidean Distortion, GAFA Geom. funct. anal., 12, 380--394 (2002)
\end{thebibliography}
\end{document}